\newtheorem{theorem}{Theorem}[section]
\newtheorem{corollary}[theorem]{Corollary}
\newtheorem{lemma}[theorem]{Lemma}
\newtheorem{definition}[theorem]{Definition}
\newtheorem{question}[theorem]{Question}
\newcommand{\RR}{\mathbb R}
\begin{document}
\title{Exotic almost complex circle actions on $6$-manifolds}
%\title{Diffeomorphism types of $6$-manifolds with an almost complex $S^1$-action}

\author{Panagiotis Konstantis and Nicholas Lindsay}

\maketitle
 
\abstract{Jang has proven a remarkable classification of $6$-dimensional manifolds having an almost complex circle action with $4$ fixed points. Jang classifies the weights and associated multigraph into six cases, leaving the existence of connected manifolds fitting into three of the cases unknown. We show that one of the unknown cases may be constructed by a surgery construction of Kustarev, and the underlying manifold is diffeomorphic to $S^4 \times S^2$. We show that the action is not equivariantly diffeomorphic to a linear one, thus giving a new exotic $S^1$-action of on a product of spheres that preserves an almost complex structure. We also prove a uniqueness statement for the almost complex structures produced by Kustarev's construction and prove some topological applications of Jang's classification. }

\section{Introduction}

In \cite[Theorem 1.1]{Ja}, Jang proved a remarkable classification of $6$-dimensional manifolds having an almost complex circle action with $4$ fixed points. This generalized earlier classifications by Ahara when the manifold satisfies $c_{1}^3 \neq 0$, and $\frac{c_{1}c_{2}}{24}=1$ \cite{A} and Tolman in the case when the action is Hamiltonian \cite{To2}.

In \cite[Theorem 1.1]{Ja}, such manifolds are classified into 6 cases, depending on their weights and associated multigraph. The existence of connected, almost complex $6$-manifolds with circle actions fitting into case 4-6 of the classification was left open. These three new unknown cases are characterized by the fact that they have vanishing Todd genus.  Jang points out that a disconnected example fitting into case 4 of the classification is furnished by $S^6 \cup S^6$. This case stands out from the others because the associated multigraph is disconnected, in fact the disjoint union two copies of the GKM graph associated to $S^6$.

Firstly, we observe that a connected example may be obtained via surgery construction of Kustarev \cite{Ku}.

\begin{theorem} \label{main} There is a connected, almost complex $6$-manifold with an almost complex $S^1$ action fitting into case 4 of Jang's classification. It is obtained by performing the Kustarev surgery operation on two copies of $S^6$, and is diffeomorphic to $S^4 \times S^2$. 
\end{theorem}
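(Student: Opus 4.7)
The plan is to construct the manifold $M$ by performing Kustarev's equivariant surgery on two copies of $S^6$, verify that the resulting fixed-point and weight data match the case 4 multigraph, and finally identify the underlying smooth manifold with $S^4 \times S^2$ by invoking the classification of simply connected closed 6-manifolds.

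First, I would equip each $S^6$ with its $G_2$-invariant nearly K\"ahler almost complex structure and a generic circle subgroup $S^1 \subset T^2 \subset G_2$. The resulting almost complex $S^1$-action has exactly two fixed points with weights of the form $\{a,b,-a-b\}$ and $\{-a,-b,a+b\}$; in particular $c_1 = 0$, so the Todd genus vanishes, consistent with case 4. I would choose integers $a,b$ so that the disjoint union $S^6 \sqcup S^6$ realizes the multigraph of case 4, reading the precise integers off from \cite[Theorem 1.1]{Ja}. Next, applying Kustarev's surgery \cite{Ku}, which performs an equivariant connected sum along a pair of free $S^1$-orbits (one in each summand), produces a connected almost complex $S^1$-manifold $M$ whose fixed points, weights, and isotropy-submanifold structure agree with those of $S^6 \sqcup S^6$. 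Hence the $S^1$-action on $M$ fits into case 4 of Jang's classification.

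The main obstacle is identifying $M$ diffeomorphically with $S^4 \times S^2$. Since the surgery is along a circle inside simply connected manifolds, $M$ is simply connected; by counting fixed points, $\chi(M) = 4$. I would then compute the integral cohomology ring together with $w_2$ and $p_1$, and invoke the Wall--Jupp--Zhubr classification of simply connected closed $6$-manifolds. The Chern numbers $c_1^3[M]$, $c_1 c_2[M]$, and $c_3[M]$ can be extracted from the fixed-point weight data via ABBV localization; combined with a Mayer--Vietoris computation adapted to the surgery, this should yield $H^*(M) \cong H^*(S^4 \times S^2)$ as rings, with $w_2(M)=0$ and $p_1(M)=0$. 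The classification theorem then forces $M \cong_{\mathrm{diff}} S^4 \times S^2$. The delicate step is to pin down the cup-product structure on $H^*(M)$ from the surgery, rather than merely the additive cohomology; for this I expect to use the fact that the connecting $S^1 \times D^4$ surgery is along a nullhomotopic circle, so that $H^2(M)$ is generated by a class whose cube (and indeed square) is forced to vanish.
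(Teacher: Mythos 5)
Your setup (the standard almost complex $S^1$-actions on $S^6$ with weights $\{a,b,-a-b\}$, $\{-a,-b,a+b\}$, the observation that the Kustarev sum is connected with the same fixed-point data, hence fits case 4) matches the paper. The genuine gap is in your final step, where you claim that ABBV localization together with a Mayer--Vietoris computation ``should yield'' $w_2(M)=0$ and $p_1(M)=0$. This cannot work as stated: the diffeomorphism type of the fiber connect sum depends on the homotopy classes of the framings of the two free orbits (there are two classes, since $\pi_1(GL_5(\mathbb{R}))=\mathbb{Z}_2$), and the two a priori possible outcomes --- $S^4\times S^2$ and the non-trivial $S^4$-bundle over $S^2$ --- have the same Betti numbers and isomorphic integral cohomology rings (the square of the degree-$2$ generator vanishes in both, so your cup-product argument, while essentially correct, does not separate them). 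The localization data is equally blind to the distinction: since every fixed point has weight sum zero, $c_1^3[M]=0$ and $c_1c_2[M]=0$ hold automatically, and these numbers are compatible both with $c_1$ even (spin, $w_2=0$) and with $c_1$ a generator mod $2$ (non-spin). So $w_2(M)$ --- exactly the invariant you need for Wall--Jupp--Zubr --- is not determined by anything you compute; it is governed by the framing with which the surgery is performed, and your argument never addresses which framing the equivariant construction actually uses.

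That framing computation is the technical core of the paper's proof and has no shortcut through characteristic numbers. The paper shows (Theorem \ref{nonframe}) that the equivariant framing of a free orbit of the standard almost complex $S^1$-action on $S^6$ is the non-trivial one, by pushing normal framings of the orbit to tangent framings of $\mathbb{R}^7$ and computing a loop in $GL_7(\mathbb{R})$ from the rotation numbers $-a,b,a+b$; then (Lemma \ref{revfr}) that the fiber connect sum of two copies of $S^6$ with non-trivial framings on both sides is diffeomorphic to the one with trivial framings on both sides, because the twisting diffeomorphism commutes with the gluing map; then, by Kosinski's identification of a fiber connect sum with a sphere as a surgery (Lemma \ref{surgery}), that $M$ is the result of surgery on a trivially framed circle in $S^6$; and finally, by attaching a $2$-handle to $D^7$ with the trivial framing to get $D^5\times S^2$ (Lemma \ref{final}), that this surgery yields $S^4\times S^2$. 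Without an argument of this kind --- or some other way of pinning down the framing, equivalently $w_2(M)$ --- your appeal to the classification theorem cannot conclude, and indeed with the wrong (mixed) framing the construction would produce the non-spin twisted bundle instead.
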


The proof follows from a topological result which expresses the fiber connect sum with a sphere along a subsphere as a surgery operation, see Section \ref{proofsec} for the relevant definitions. The most technichal step is to determine the class of the framing associated to a free orbit, which is done in subsection \ref{comfram}.

Moreover, we prove the following corollary, showing that these circle actions are exotic.

\begin{corollary} \label{exotic}
There exists a circle action on $S^4 \times S^2$, preserving an almost complex structure and not equivariantly diffeomorphic to a linear circle action.
\end{corollary}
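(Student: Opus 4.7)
The plan is to exhibit a smooth equivariant invariant of the action of Theorem~\ref{main} which cannot be matched by any linear circle action on $S^4\times S^2$. I will use the \emph{isotropy multigraph} $\Gamma$: its vertices are the fixed points, and its edges are the closed connected $S^1$-invariant $2$-spheres on which the stabilizer is a nontrivial finite cyclic subgroup. Since $\Gamma$ is determined by the orbit-type stratification, equivariant diffeomorphisms preserve it.

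First I would determine $\Gamma$ for the manifold of Theorem~\ref{main}. Kustarev's surgery is performed along a free orbit, so it disturbs neither the fixed point set nor any invariant $2$-sphere with nontrivial finite stabilizer, all of which lie outside the surgery region. Hence $\Gamma$ agrees with the disjoint union of the multigraphs of the two $S^6$ summands. With weights on each $S^6$ chosen so that all three have absolute value at least $2$ (which can be arranged within Jang's case~4), each summand contributes a pair of vertices joined by exactly three edges, corresponding to the three coordinate invariant $2$-spheres of $S^6$. In particular $\Gamma$ contains a pair of vertices joined by exactly three edges.

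Second, I would analyze $\Gamma$ for an arbitrary linear circle action on $S^4\times S^2$ with four isolated fixed points. Such an action arises from orthogonal representations $\RR^5 = \RR\oplus\CC_a\oplus\CC_b$ and $\RR^3 = \RR\oplus\CC_c$, with fixed points $\{N,S\}\times\{N,S\}$. The invariant $2$-spheres through a fixed point correspond to $S^1$-invariant complex lines in the tangent representation $\CC_a\oplus\CC_b\oplus\CC_c$: namely the three axis lines $\CC_a,\CC_b,\CC_c$, a $\CC\PP^1$-family of ``diagonal'' lines whenever two of $|a|,|b|,|c|$ coincide, and a $\CC\PP^2$-family if all three coincide. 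A direct case analysis of which pairs of fixed points these invariant $2$-spheres connect will show that the number of edges joining any two fixed points is $0$, $1$, $2$, or $\infty$; crucially, it is never exactly $3$.

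Since the two multigraphs disagree on the existence of a pair of vertices joined by exactly three edges, the action of Theorem~\ref{main} cannot be equivariantly diffeomorphic to any linear circle action. The main delicate step is the case analysis of invariant $2$-spheres in a product linear action when weights coincide across the two factors: one must parameterize the diagonal $2$-spheres explicitly, decide via the ratios of their defining complex coordinates which pair of fixed points they join, and verify that in these degenerate cases the count is either $0$ or $\infty$, preserving the dichotomy.
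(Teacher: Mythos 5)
Your strategy differs from the paper's: the paper argues by transferring the weights at a fixed point through the hypothetical equivariant diffeomorphism (via an invariant metric and the exponential map), which forces all three weights $w_1,w_2,w_3$ of the linear action to be greater than $1$; it then compares a cruder invariant, namely the connectedness of the set of points with non-trivial stabilizer, which is connected for such a linear action (six isotropy spheres chaining the four fixed points) but disconnected for the Kustarev sum, since the surgery is performed along a free orbit. Your multigraph invariant is in principle finer, and it spares you the weight-transfer step, but it shifts the entire burden onto a complete classification of invariant $2$-spheres for \emph{arbitrary} linear actions on $S^4\times S^2$, and this is exactly where your proposal has a genuine gap: the claim that edge counts are always $0,1,2$ or $\infty$ is asserted, not proved, and the proposed parametrization of invariant $2$-spheres through a fixed point by $S^1$-invariant complex lines in the tangent representation is not correct in the degenerate cases. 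For instance, if $w_1=d\,w_3$ with $d\ge 2$, the graph of an equivariant degree-$d$ map from the $w_1$-coordinate sphere in $S^4$ to the $S^2$ factor is an invariant $2$-sphere with non-trivial generic stabilizer whose tangent plane at the fixed point is the coordinate line $\mathbb{C}_{w_1}$ itself, so distinct invariant spheres share the same tangent line and your case analysis, as organized, would miss them; similarly, when some $\mathbb{Z}_k$ fixes a component of dimension $4$ (e.g.\ $k\mid w_1$ and $k\mid w_2$), the isotropy $2$-spheres need not be cut out by tangent data at all and must be enumerated inside that component. These extra families do not appear to destroy the final dichotomy (they arise only when the count is already infinite), but until the enumeration is carried out in all degenerate weight configurations (coincidences \emph{and} divisibility relations across the two factors, signs included), the key step of your argument is incomplete.

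A second, smaller point: on the Kustarev side you must also rule out isotropy $2$-spheres meeting the gluing region or joining fixed points of different summands; this is immediate once you note that any such sphere lies in the closure of the set of points with non-trivial finite stabilizer, which misses the free neck, but it should be said. Note finally that the paper's weight-matching trick would simplify your route considerably: once the equivariant diffeomorphism forces $w_1,w_2,w_3>1$ and pairwise coprime, the only isotropy spheres of the linear action are the six coordinate ones, the multigraph is a $4$-cycle with edge multiplicities $2,1,2,1$, and no degenerate case analysis is needed.
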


Our second result, involves the more general surgery construction of Kustarev for almost complex $2n$-manifolds with an almost complex $T^{k}$-action \cite{Ku2}. Kustarev proves that for $2n-k=2,4,5,6 \mod 8$, the the original construction may be generalized directly. I.e. the result of gluing two such manifolds along the complement of open neighborhoods of free orbits has a $T^{k}$-invariant almost complex structure.

Our next result is the following:

\begin{theorem}
Suppose that $2n-k = 4,5 \mod 8$, then the almost complex structure produced by Kustarev's surgery is unique up to homotopy. In particular, for $S^1$-actions the homotopy class of the almost complex structure is unique, provided $n= 3 \mod 4$.
\end{theorem}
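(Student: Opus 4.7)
The strategy is obstruction theory. Two outputs $J_0, J_1$ of Kustarev's construction with the same initial data agree outside a $T^k$-invariant tubular neighborhood $U$ of the surgery region, equivariantly modeled on $T^k \times D^{2n-k}$ with $T^k$ acting by translations on the first factor. Under this model, a $T^k$-invariant almost complex structure on $U$ corresponds to a smooth map $D^{2n-k} \to SO(2n)/U(n)$, where the target parametrizes orthogonal complex structures on $\mathbb{R}^{2n}$. A $T^k$-equivariant homotopy between $J_0$ and $J_1$ rel $\partial U$ is thus the same as a homotopy of the associated maps $f_0, f_1 \colon D^{2n-k} \to SO(2n)/U(n)$ rel $\partial D^{2n-k}$.

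Since $f_0$ and $f_1$ agree on $\partial D^{2n-k} = S^{2n-k-1}$, the obstruction to such a homotopy is the class in $\pi_{2n-k}(SO(2n)/U(n))$ represented by the map $S^{2n-k} \to SO(2n)/U(n)$ obtained by gluing $f_0$ and $f_1$ along their common boundary. By Bott periodicity, in the stable range $\pi_m(SO(2n)/U(n)) \cong \pi_m(SO/U) \cong \pi_{m+1}(O)$. The stable homotopy groups $\pi_m(O)$ are $\mathbb{Z}/2, \mathbb{Z}/2, 0, \mathbb{Z}, 0, 0, 0, \mathbb{Z}$ for $m = 0, \ldots, 7$ and are $8$-periodic, so they vanish exactly for $m \equiv 2, 4, 5, 6 \pmod 8$. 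Hence $\pi_{2n-k}(SO(2n)/U(n)) \cong \pi_{2n-k+1}(O)$ vanishes whenever $2n - k \equiv 1, 3, 4, 5 \pmod 8$. Intersecting with Kustarev's existence condition $2n-k \equiv 2,4,5,6 \pmod 8$ (which corresponds to the vanishing of $\pi_{2n-k}(O)$, guaranteeing the construction goes through in the first place) yields the hypothesis $2n-k \equiv 4,5 \pmod 8$.

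The in-particular statement follows immediately: for $k=1$, the parity of $2n$ forces $2n \equiv 6 \pmod 8$, i.e.\ $n \equiv 3 \pmod 4$. The main obstacle in executing this plan is justifying the reduction step: one must show that any two outputs of Kustarev's construction with the same input data can indeed be arranged to coincide outside such a neighborhood $U$, and that the ambiguity introduced by auxiliary choices in the construction is captured precisely by the homotopy class of a map $D^{2n-k} \to SO(2n)/U(n)$ modulo its boundary values. A secondary technicality is verifying that we lie in the stable range where Bott periodicity gives the identification above; this is expected to be routine since $SO(2n)/U(n)$ is highly connected and its homotopy groups stabilize in a range comfortably containing $m = 2n-k$.
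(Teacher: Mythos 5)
Your proposal is essentially the paper's own argument: using freeness of the action, the equivariant uniqueness question is reduced to a non-equivariant homotopy/lifting problem on a slice of the gluing region whose quotient by its boundary is $S^{2n-k}$, so the only obstruction to uniqueness is a difference class in $\pi_{2n-k}(SO(2n)/U(n))$, which you compute via Bott periodicity and intersect with Kustarev's existence condition to get $2n-k\equiv 4,5 \pmod 8$, and hence $n\equiv 3\pmod 4$ when $k=1$. The only discrepancies are cosmetic: in the paper the region where the two structures can differ is the collar $S^{2n-k-1}\times[0,1]$ rel its two ends rather than a disc rel its boundary (the quotient is the same sphere, so nothing changes), and your index bookkeeping, requiring $\pi_{2n-k-1}$ and $\pi_{2n-k}$ of $SO(2n)/U(n)$ to vanish, is actually the consistent one -- it matches the stated congruence $4,5\pmod 8$, while the paper's proof text shifts the indices and prints $5,6\pmod 8$.
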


In particular, for Kustarev's original surgery construction for $S^1$-actions on $6$-manifolds, the homotopy class is uniquely determined by the original almost complex $S^1$-manifolds. In the following section we return our attention to Jang's classification, and prove the following topological restriction on examples fitting into another of the unknown cases.

\begin{theorem}
Let $M$ be a simply connected, $\mathbb{Z}$-equivariantly formal, almost complex $6$-manifold fitting into Case 6 of Jang's classification. Then, $M$ is diffeomorphic to the quadric $3$-fold. 
\end{theorem}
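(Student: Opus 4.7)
The overall plan is to show that the fixed point data in Jang's Case 6, together with $\mathbb{Z}$-equivariant formality, force enough cohomological and characteristic class invariants to pin down $M$ via a classification theorem for simply connected $6$-manifolds (Wall, Jupp, Okonek--Van de Ven). The target is the quadric $Q^3 \subset \mathbb{CP}^4$, whose invariants are $H^*(Q^3;\mathbb{Z}) = \mathbb{Z}[h,\ell,\mathrm{pt}]$ with $h^2 = 2\ell$, $h^3 = 2\,\mathrm{pt}$, $c_1(Q^3) = 3h$, $p_1(Q^3)\cup h = 10\,\mathrm{pt}$, $w_2(Q^3) = h\bmod 2$.

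First I would extract the cohomological skeleton. $\mathbb{Z}$-equivariant formality forces $H^*(M;\mathbb{Z})$ to be torsion-free and concentrated in even degree, with total rank equal to the number of fixed points, namely $4$. Combined with $\pi_1(M)=1$ and Poincar\'e duality, this gives $H^0 = H^6 = \mathbb{Z}$ and $b_2 + b_4 = 2$. I would then read off from the Case 6 multigraph of Jang's classification that $b_2 = b_4 = 1$: either directly (if the GKM-style edge data already forces this split), or via equivariant cohomology, using the $H^*_{S^1}(pt)$-module structure on $H^*_{S^1}(M)$ together with the explicit weights at each fixed point to identify the degrees in which classes survive restriction.

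Next I would compute the Chern numbers $c_1^3[M]$ and $c_1 c_2[M]$ from the Case 6 weight list by Atiyah--Bott--Berline--Vergne localization, and the self-intersection $h^2$ (for $h$ a generator of $H^2$) by localizing an equivariant extension of $h$ produced via equivariant formality, where $h$ is determined up to sign by its restrictions to the fixed points. These restrictions are essentially encoded in the multigraph, so ABBV gives integer identities that determine the cubic form $x \mapsto \langle x^3, [M]\rangle$ on $H^2 = \mathbb{Z}$, the linear form $x \mapsto \langle p_1 x, [M]\rangle$ (via $p_1 = c_1^2 - 2c_2$), and $w_2 \equiv c_1 \bmod 2$. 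The expected output is $\langle h^3,[M]\rangle = 2$, $\langle p_1 h, [M]\rangle = 10$, $w_2 \neq 0$, matching $Q^3$.

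Finally I would invoke Jupp's classification (or the simply connected, $b_2 = 1$ case of Wall's theorem) to conclude that a simply connected closed $6$-manifold with torsion-free cohomology, $b_3 = 0$, and the invariants above is diffeomorphic to $Q^3$. The main obstacle is the middle step: organizing the Case 6 weight/multigraph data so that the ABBV computations give clean integers. In particular, verifying that the cubic form is exactly $2x^3$ (and not, say, an unrelated value allowed a priori by the weight constraints) is where I expect all the real work to be, and may require case analysis within Case 6 to rule out numerical exceptions before the classification theorem applies.
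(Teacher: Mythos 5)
Your overall strategy (extract the cohomology from $\chi(M)=4$ plus $\mathbb{Z}$-equivariant formality, compute characteristic numbers by localization, then invoke Wall--Jupp--Zubr) is the same as the paper's, but the step you yourself flag as ``where all the real work'' lies is left open, and the paper closes it with an observation you are missing. ABBV localization applied to the Case 6 weights gives $c_1^3[M]=-2$ directly, and since $-2$ is not divisible by any nontrivial cube, $c_1$ must be $\pm$ a generator of $H^2(M;\mathbb{Z})\cong\mathbb{Z}$. This single remark makes the cubic form $2x^3$ immediate: no equivariant extension of $h$, no localization of $h^3$, and no case analysis within Case 6 is needed. The $p_1$-pairing then also comes for free: Case 6 has $\mathrm{Td}(M)=0$, so $c_1c_2[M]=0$ (the paper gets this from the $\chi_y$-genus localization rather than a separate ABBV computation), Poincar\'e duality with $c_1$ primitive forces $c_2=0$, hence $p_1=c_1^2$ and $\langle p_1\,\alpha,[M]\rangle=\alpha^3=2$ for the generator $\alpha=-c_1$; finally $c_1^3=-2$ odd gives $w_2\neq 0$. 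Without the primitivity of $c_1$, knowing $c_1^3$ and $c_1c_2$ does not determine $\langle h^3\rangle$ or $\langle p_1h\rangle$, so as written your plan genuinely stalls at the point you identify.

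There is also a concrete error in your target data: for the quadric $Q^3\subset\mathbb{CP}^4$ one has $p_1(Q^3)=h^2$ (from $p_1(\mathbb{CP}^4)|_Q=5h^2$ minus the normal bundle contribution $4h^2$), so $\langle p_1\cup h,[Q^3]\rangle=\langle h^3,[Q^3]\rangle=2$, not $10$; you appear to have restricted $p_1(\mathbb{CP}^4)$ without dividing by $p(\nu)$. Since the correct localization output for $M$ is $\langle p_1 h,[M]\rangle=2$, matching it against your stated value $10$ would produce a spurious mismatch exactly at the final application of the classification theorem. With the primitivity observation added and the quadric's $p_1$ corrected, your outline does converge to the paper's argument.
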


The proof follows from the ABBV localization theorem and the Wall-Jupp-Zubr classification theorem for $6$-manifolds.

We finish the article with a brief section discussing open questions and further directions. Because of the wealth of examples produced via Kustarev's surgery construction it is necessary to restrict the class of manifolds in Jang's classification to hope to obtain finiteness up to for example diffeomorphism (homotopy equivalence could be equally appropriate here). We pose some specific questions, which appear to be interesting and perhaps approachable.

\textbf{Acknowledgements.} We would like to thank Silvia Sabatini for helpful discussions.

\section{Preliminaries}
We start by recalling some standard results about almost complex circle actions, which we will need.

\begin{lemma} \label{Degreeformula}
	Let $M$ be a closed, almost complex $6$-manifold with a almost complex $S^{1}$-action having finitely many fixed points. Label the points $p_1 \ldots p_k$ and let the weights at $p_j$ be denoted $\{w_{j,1}, \ldots , w_{j,n}\}$. Then, $$\int_M c_1(TM)^3  = \sum_{p_j}\frac{( \sum_j w_{i,j})^3}{\prod_j w_{i,j}}.$$
\end{lemma}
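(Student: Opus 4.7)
The plan is to apply the Atiyah--Bott--Berline--Vergne (ABBV) localization formula in $S^1$-equivariant cohomology to the equivariant first Chern class of $TM$. Since $M$ is an almost complex $S^1$-manifold, $TM$ inherits the structure of an equivariant complex vector bundle, so there is a well-defined equivariant first Chern class $c_1^{S^1}(TM) \in H^2_{S^1}(M)$ lifting $c_1(TM)$.

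First I would recall that under equivariant integration $\int_M^{S^1} \colon H^*_{S^1}(M) \to H^{*-2n}(BS^1)$, an equivariant class of top equivariant degree $2n = 6$ is sent to $H^0(BS^1) = \ZZ$, and that in this top-degree case the equivariant integral coincides with the ordinary integral of the underlying non-equivariant class; hence $\int_M c_1(TM)^3 = \int_M^{S^1} (c_1^{S^1}(TM))^3$. Next, at each isolated fixed point $p_i$ the tangent space decomposes equivariantly into complex lines on which $S^1$ acts with weights $w_{i,1}, \ldots, w_{i,n}$. Writing $u$ for the generator of $H^*(BS^1)$, this yields the two standard pointwise computations
\begin{equation*}
c_1^{S^1}(TM)\big|_{p_i} = \left( \sum_{j=1}^n w_{i,j} \right) u, \qquad e^{S^1}(T_{p_i} M) = \left( \prod_{j=1}^n w_{i,j} \right) u^n.
\end{equation*}

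Then I would invoke ABBV, which for an isolated-fixed-point action reads $\int_M^{S^1} \alpha = \sum_i \alpha|_{p_i} / e^{S^1}(T_{p_i} M)$. Substituting $\alpha = (c_1^{S^1}(TM))^3$ and $n = 3$, the powers of $u$ cancel on the nose and give
\begin{equation*}
\int_M c_1(TM)^3 = \sum_i \frac{\bigl(\sum_j w_{i,j}\bigr)^3 u^3}{\bigl(\prod_j w_{i,j}\bigr) u^3} = \sum_i \frac{\bigl( \sum_j w_{i,j}\bigr)^3}{\prod_j w_{i,j}},
\end{equation*}
which is the claimed identity (up to the harmless relabeling of the summation indices in the statement).

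There is no real obstacle here beyond setup: the only point requiring care is justifying the existence of the equivariant Chern class lift and that the resulting $u$-powers in numerator and denominator match, which forces the statement to hold in exactly complex dimension $n = 3$. Everything else is a direct application of ABBV, so this lemma is essentially a bookkeeping exercise once the equivariant framework is in place.
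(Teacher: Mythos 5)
Your proposal is correct and follows exactly the same route as the paper, which simply cites ABBV localization applied to $(c_1^{S^1}(TM))^3$; you have merely spelled out the standard restriction and equivariant Euler class computations at the isolated fixed points that the paper leaves implicit.
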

\begin{proof}
	This follows by applying Applying ABBV localization \cite{AB,BV} 
%\cite{MR685019} 
theorem to $(c_{1}^{S^{1}}(TM))^3$.
\end{proof}

\begin{lemma} \label{prel}
	Let $M$ be closed $6$-manifold having a almost complex $S^{1}$-action with finitely many fixed points. Let $N_{0}$ denote the number of fixed points such that none of the weights are negative. Then,
		 $$N_{0} = \frac{c_{1}c_{2}(M)}{24} = \mathrm{Td}(M),$$ is the Todd genus of $M$.
\end{lemma}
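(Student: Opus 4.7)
The second identity $\frac{c_1 c_2}{24} = \mathrm{Td}(M)$ is the complex-dimension-$3$ case of Hirzebruch's Todd-genus formula, so the only task is to prove $N_0 = \mathrm{Td}(M)$. My plan is to apply equivariant localization to the Todd genus, realised as the index of the Spin$^c$ Dirac operator associated to $(M, J)$.

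First I would invoke the rigidity theorem of Hattori for almost complex $S^1$-manifolds (the analogue of Atiyah--Hirzebruch rigidity of the $\hat A$-genus): the $S^1$-equivariant index of this Dirac operator is a constant character of $S^1$, identically equal to $\mathrm{Td}(M)$. Second, the Atiyah--Bott--Segal--Singer fixed point formula expresses this equivariant index as a sum over fixed points,
$$\mathrm{Td}(M) \;=\; \sum_{p} \prod_{j=1}^{3} \frac{1}{1 - t^{-w_{p,j}}},$$
an equality of rational functions of $t \in S^1$.

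Finally I would let $t \to \infty$. Since the fixed points are isolated, no weight is zero, and the factor $\frac{1}{1-t^{-w}}$ tends to $1$ if $w > 0$ and to $0$ if $w < 0$. Hence the local contribution at $p$ limits to $1$ precisely when all its weights are positive, and to $0$ otherwise, giving $\lim_{t \to \infty}(\mathrm{RHS}) = N_0$. Since the left-hand side is constant in $t$, this yields $N_0 = \mathrm{Td}(M)$. The statement is essentially Kosniowski's theorem; the principal (if mild) subtlety is appealing to rigidity in the merely almost complex setting, where Hattori's theorem does the job.
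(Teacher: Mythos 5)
Your proof is correct and follows essentially the same route as the paper: the paper simply cites the localization formula for the Hirzebruch $\chi_{y}$-polynomial (of which the Todd genus is the constant term at $y=0$), and that formula is established by precisely your argument --- rigidity of the genus together with the Atiyah--Bott--Segal--Singer fixed point formula and the limit $t\to\infty$ picking out the fixed points with no negative weights. The only cosmetic difference is that you specialize to the Todd genus from the start via the Spin$^c$ Dirac operator instead of quoting the packaged $\chi_{y}$ statement.
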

\begin{proof}
This follows from the localization formula for the Hirzebruch $\chi_{y}$-polynomial 
\cite[Section 5.7]{HBJ},
 of which the Todd genus is the constant term.
\end{proof}

The following theorem is the main result of \cite{Ja}.

\begin{theorem}[{\cite[Theorem 1.1]{Ja}}] \label{jang} 
	Let $M$ be an almost complex $6$-manifold with an almost complex circle action with $4$ fixed points. Then one of the following cases occurs:
	
	\begin{enumerate}[label=(\alph*)]
		\item$\mathbb{CP}^3$ type. $\mathrm{Td} (M) = 1$ and the weights of the circle action are $\{a,b,c\}$, $\{-a,b-a,c-a\}$, $\{-b,a-b,c-b\}$, $\{-c,a-c,b-c\}$ for mutually distinct positive integers $a,b,c$.
		
		\item $Q^3$ type.  $\mathrm{Td}(M) = 1$ and the weights of the circle action are $\{a,a+b,a+2b\}$, $\{-a,b,a+ 2b\}$, $\{-a-2b,-b,a\}$, $\{-a-2b,-a-b,-a\}$, for some positive integers $a,b$.
		      
		\item Fano $3$-fold type .  $\mathrm{Td}(M) = 1$ and the weights of the circle action are $\{1,2,3\}$, $\{-1,1, a \}$, $\{-1,1,-a\}$, $\{-1,-2,-3\}$ for some integer $a$.
		      
		\item $S^6 \cup S^6 $ type.  $\mathrm{Td} (M) = 0$ and the weights of the circle action are $\{a,b,-a-b\}$, $\{-a,-b,a+b\}$, $\{c,d,-c-d\}$, $\{-c,-d,c+d\}$, for some positive integers $a,b,c,d$.
		      
		\item $Bl_{p}(S^6)$ type.  $\mathrm{Td}(M) = 0$ and the weights of the circle action are $\{-3a-b,a,b\}$, $\{-2a-b,3a+b,3a+2b\}$, $\{-a,-a-b,2a+b\}$, $\{-b,-3a-2b,a+b\}$, for some positive integers $a,b$ and reversing the circle action if neccesary.
		      
		\item $Bl_{C}(S^6)$ type.  $\mathrm{Td}(M) = 0$ and the weights of the circle action are $\{-a-b,2a+ b, b\}$, $\{-2a-b,a,b\}$, $\{-b,-2a-b,a+b\}$, $\{-a,-b,2a+b\}$, for some positive integers $a,b$.
		      
	\end{enumerate}
\end{theorem}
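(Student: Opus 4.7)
The plan is to reduce the classification to a finite combinatorial problem by combining (i) ABBV-style localization, which turns Chern numbers into rational expressions in the weights, with (ii) the \emph{isotropy multigraph} $\Gamma$ that records how the fixed points are connected by $S^1$-invariant $2$-spheres. First, label the four fixed points $p_1,\dots,p_4$ with weight triples $W_i$, and let $N_k$ denote the number of fixed points having exactly $k$ negative weights. Applying Lemma~\ref{prel} to the $S^1$-action and to its reverse gives $N_0 = N_3 = \mathrm{Td}(M)$, since the Todd genus is an invariant of the almost complex structure only; combined with $\sum N_k = 4$, this forces $2N_0 + N_1 + N_2 = 4$ and $N_0 \in \{0,1,2\}$. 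For each weight $w_{i,j}$ at $p_i$, the connected component of the $\mathbb{Z}/|w_{i,j}|$-fixed set of $M$ through $p_i$ is a closed almost complex submanifold on which $S^1$ still acts with $p_i$ as an isolated fixed point; generically it is a $2$-sphere whose opposite pole is another fixed point $p_{i'}$ carrying weight $-w_{i,j}$. Recording these pairings endows $\Gamma$ with the structure of a $3$-regular multigraph on four vertices with oriented weight labels on its edges.

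Next, I would use Lemma~\ref{Degreeformula}, the identity $c_1 c_2/24 = N_0$ from Lemma~\ref{prel}, the count $c_3(M) = \chi(M) = 4$, and the analogous localization formulas for the higher Hirzebruch $\chi_y$-genera, to convert each Chern number of $M$ into a rational expression in the weight parameters. Integrality of these expressions, together with the matching relations $w_{i,j} = -w_{i',j'}$ imposed by $\Gamma$, yields a closed Diophantine system in finitely many weight parameters. A case analysis then runs over the values of $N_0$ and the combinatorial type of $\Gamma$. For $N_0 = 1$, the unique source-sink pair forces most of $\Gamma$, and the remaining freedom in the two mixed fixed points produces three normal forms, which I expect to match cases (a), (b), (c). For $N_0 = 0$ the multigraph $\Gamma$ is either disconnected, yielding case (d) (the $S^6 \cup S^6$ type, where each component is the GKM graph of an almost complex $S^6$), or one of two connected configurations giving cases (e) and (f). For $N_0 = 2$, the bipartite source-sink configuration should be ruled out by showing that the ABBV equations for $c_1^3$ and $c_1 c_2$, already constrained by AM-GM-type positivity of source and sink contributions, admit no solution in positive integers compatible with the matching.

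The hardest step will be the systematic enumeration of the multigraphs $\Gamma$ and their edge-weight labelings, paired with the elimination of degenerate configurations in which weights at a single fixed point share a common factor. In those degenerate cases the component of the $\mathbb{Z}/|w|$-fixed set through a fixed point can be higher-dimensional, breaking the $2$-sphere picture used to define edges of $\Gamma$; one must then argue separately that such a higher-dimensional component would carry additional $S^1$-fixed points, contradicting the hypothesis of exactly four. Checking that every weight configuration has been accounted for, and that in each surviving case the Diophantine system really collapses to the claimed one- or two-parameter integer family, will require careful bookkeeping but no additional conceptual input beyond ABBV and the isotropy structure of isolated almost complex fixed points.
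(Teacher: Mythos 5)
The paper offers no proof of this statement: it is quoted verbatim as \cite[Theorem 1.1]{Ja} and used as a black box, so there is no internal argument to compare yours against. Judged on its own terms, your outline does track the broad strategy of Jang's actual proof (localization of Chern numbers and $\chi_y$-genera at the four fixed points, the relation $N_k = N_{3-k}$ forcing $\mathrm{Td}(M)=N_0\in\{0,1,2\}$, and the organization of the weights into a labelled multigraph), but as written it is a research plan rather than a proof, and the deferred steps are precisely where all the difficulty lives.

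Three gaps are genuine rather than merely expository. First, the existence of the multigraph itself is not automatic: for a weight $w$ at $p_i$, the component of the $\mathbb{Z}/|w|$-fixed set through $p_i$ need not be a $2$-sphere joining $p_i$ to a point carrying $-w$; what one actually has is the weaker combinatorial statement that each weight $w$ occurs among all fixed points as often as $-w$ does, and upgrading this to a well-defined multigraph (Jang devotes separate arguments, building on work of Jang--Tolman, to this) is not a ``degenerate case'' to be handled at the end but the foundation of the whole case analysis. Second, the exclusion of $N_0=2$ cannot be waved at: with $N_0=N_3=2$ and $N_1=N_2=0$ the constraints $\chi(M)=4$ and $\sigma(M)=\chi_1(M)=0$ are both satisfied, so no ``AM--GM-type positivity'' of the $c_1^3$ and $c_1c_2$ contributions rules it out for free; an actual argument with the weight-matching relations is required. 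Third, the passage from ``a closed Diophantine system'' to the six explicit one- and two-parameter normal forms (a)--(f) is the bulk of Jang's thirty-odd pages; asserting that the system ``really collapses'' to those families is the theorem, not a step toward it. Until those three items are supplied, the proposal establishes only the skeleton $\mathrm{Td}(M)\in\{0,1,2\}$ plus a framework, not the classification.
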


\section{The Kustarev surgery}\label{Sec: Kustarev surgery}

In this section we will give a presentation of Kustarev's surgery construction from \cite{Ku,Ku2}. We feel that the current
presentation may save some time for readers less familiar with obstruction
theory to verify the details. Moreover, we set up notation to show in some cases the Kustarev construction yields a
unique invariant almost complex structure up to homotopy.

Suppose $M_{0}$ and $M_{1}$ are two
compact, smooth $2n$-manifolds equipped with a smooth faithful action of a connected, compact Lie group 
$G$ of dimension $k$. Assume there are free $G$-orbits $O_{i} \subset M_{i}$.

Then, by the equivariant slice theorem there is a tubular neighborhood $U_{i}$ of $O_{i}$ which is canonically,
equivariant diffeomorphic to $G \times D^{2n-k}$, where $D^{2n-k}$ denotes here
the open disc of radius $1$ in $\RR^{2n-k}$. The action is the standard action on the left factor and the trivial action on the right fact. Thus $U_{i} \setminus O_i
\times \{0\}$ is diffeomorphic to $G \times S^{2n-k-1} \times (0,1)$. We define
the \emph{fibred connected sum} by
\[
	M_{0} \#_{G} M_{1} := (M_{0} \setminus O_0 \times \{0\} \cup
	M_{1} \setminus O_1 \times \{0\})/\sim
\]
where we identify $(t,v,r) \in U_{0} \setminus G \times \{0\} \cong
G \times S^{2n-k-1} \times (0,1)$ with $(t,v, 1-r) \in U_{1} \setminus G 
\times \{0\}$. It is clear that there is an induced $G$-action on the
fiber connected sum.

\begin{theorem}[\cite{Ku,Ku2}]\label{T: Kustarev}
Let $M_{0}$ and $M_{1}$ be two compact manifolds of dimension $2n$, having a smooth faithful almost
complex action of a compact, connected Lie group $G$ of dimension $k$, such that each manifold possess a
free orbit. If $2n-k$ is $2,4,5,6$ mod $8$ then $M_{0}
\#_{G}M_{1}$ admits a $G$-invariant almost complex structure, which
extends the almost complex structures on $M_{i} \setminus U_{i} \subset M_{0}
\#_{G} M_{1}$ for $i=0,1$.
\end{theorem}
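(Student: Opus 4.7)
The plan is to recast the gluing as an obstruction-theoretic problem about maps into the homogeneous space $SO(2n)/U(n)$ of orientation-compatible linear complex structures on $\RR^{2n}$. First, by the equivariant slice theorem each orbit neighborhood $U_i$ is canonically $G$-diffeomorphic to $G \times D^{2n-k}$, and using left-translation on the $G$-factor the tangent bundle of $U_i$ trivializes as $\underline{\mathfrak{g} \oplus \RR^{2n-k}}$ with the differential of the $G$-action becoming the identity. Consequently a $G$-invariant, orientation-compatible almost complex structure on $G \times D^{2n-k}$ is the same data as a smooth map $f_i \colon D^{2n-k} \to SO(2n)/U(n)$, and the analogous correspondence holds on the cylindrical overlap $G \times S^{2n-k-1} \times (0,1)$.

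After identifying the two collars via $(t,v,r) \sim (t,v,1-r)$ (pre-composing one trivialization with an orientation-reversing reflection of $\RR^{2n-k}$ to align the orientations on $M_0$ and $M_1$), the problem becomes: given the two smooth maps
\[
\phi_0,\phi_1 \colon S^{2n-k-1} \longrightarrow SO(2n)/U(n)
\]
obtained from the boundary restrictions of $f_0$ and the reflected $f_1$, construct a smooth homotopy $\Phi \colon S^{2n-k-1} \times [0,1] \to SO(2n)/U(n)$ between them, flat near the two endpoints so as to glue smoothly into the original structures. Such a $\Phi$ exists precisely when $\phi_0$ and $\phi_1$ represent the same class in $[S^{2n-k-1}, SO(2n)/U(n)]$. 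For $n \geq 2$ the long exact sequence of the fibration $U(n) \to SO(2n) \to SO(2n)/U(n)$ shows that $SO(2n)/U(n)$ is simply connected, so this set of homotopy classes equals $\pi_{2n-k-1}(SO(2n)/U(n))$, and it suffices that this group vanishes.

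To finish, I would invoke Bott periodicity. Since $k \geq 1$, the sphere dimension $2n-k-1$ satisfies $2n-k-1 \leq 2n-2$ and lies in the stable range, so $\pi_{2n-k-1}(SO(2n)/U(n)) \cong \pi_{2n-k-1}(SO/U) \cong \pi_{2n-k}(SO)$. The stable homotopy groups $\pi_j(SO)$ for $j \geq 1$ are $8$-periodic and vanish exactly when $j \equiv 2, 4, 5, 6 \pmod 8$, so $\pi_{2n-k}(SO) = 0$ precisely for the congruences in the hypothesis. The resulting $\Phi$ then assembles into a $G$-invariant almost complex structure on $M_0 \#_G M_1$ extending both originals.

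The main obstacle is not the homotopy-theoretic input, which is classical, but the careful bookkeeping needed to identify $G$-invariant almost complex structures with maps into $SO(2n)/U(n)$ and to track orientations under the gluing map $r \mapsto 1-r$. In particular one must ensure that $\phi_0$ and $\phi_1$ both land in the connected component of $SO(2n)/U(n)$ corresponding to the orientation on $M_0 \#_G M_1$ inherited from the originals, so that the putative interpolation is actually possible within the target. Once this setup is in place, the vanishing of the obstruction is immediate from Bott periodicity in the stated congruence range.
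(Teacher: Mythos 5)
Your proposal is correct, and it reaches the theorem by the same essential reduction as the paper (restrict to the slice $\{\mathbf 1\}\times S^{2n-k-1}\times[0,1]$ using freeness and invariance, then observe that the only obstruction lives in $\pi_{2n-k-1}(SO(2n)/U(n))$, which sits in the stable range since $k\geq 1$ and vanishes exactly for $2n-k\equiv 2,4,5,6 \bmod 8$), but the mechanism you use for the extension step is genuinely more elementary. The paper phrases the extension as a relative lifting problem for the classifying map along $p\colon BU(n)\to BSO(2n)$ and invokes the Moore--Postnikov/obstruction-theoretic existence theorem for lifts, computing $H^{i+1}(W,A;\pi_i(F))\cong \tilde H^{i+1}(S^{2n-k};\pi_i(F))$ with $F\simeq SO(2n)/U(n)$; you instead exploit the fact that the tangent bundle over the collar $G\times S^{2n-k-1}\times[0,1]$ is trivial (left-translation on $TG$ plus the product structure), so that an invariant almost complex structure is literally a map of the quotient cylinder into the space of orientation-compatible complex structures on $\mathbb{R}^{2n}$, and the extension problem becomes an explicit homotopy between $\phi_0$ and $\phi_1$, settled by $\pi_{2n-k-1}(SO(2n)/U(n))\cong\pi_{2n-k}(SO)=0$ via Bott periodicity. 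What your route buys is the avoidance of classifying spaces and Moore--Postnikov towers altogether, at the (harmless here) cost of using triviality of $TM$ over the surgery region, which the paper's argument does not need; the stable homotopy input is identical in both proofs. Two small points of care: the space of orientation-compatible linear complex structures is $GL^+(2n,\mathbb{R})/GL(n,\CC)$, which is only homotopy equivalent to $SO(2n)/U(n)$ (or first homotope to structures compatible with an invariant metric), and your explicit attention to aligning orientations under the flip $r\mapsto 1-r$ is a detail the paper passes over silently, so it is a point in your favour rather than a gap.
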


It is known that the tangent bundle of a smooth manifold $M$ induces up to
homotopy a unique map $M \to B \mathrm{SO}(2n)$ (we denote by $BH$ the
classifying space of a topological group $H$). The existence of an almost
complex structure on $M$ is equivalent to the existence of a lift $M \to
BU(n)$ of $M \to B \mathrm{SO}(2n)$ through the map $p \colon BU(n) \to
BSO(2n)$. Let $F$ be the homotopy fiber of $p$, thus
$F$ is homotopy equivalent to $\mathrm{SO}(2n)/\mathrm{U}(n)$. We recall a
foundational theorem of obstruction theory, namely the existence and uniqueness
for the relative lifting problem. The existence is proved on \cite[Page 418]{H},
the uniqueness is left as a (straightforward) exercise \cite[Ex. 24, page
420]{H}.

\begin{theorem} \label{toplemma}
	Suppose $p: X \rightarrow Y$ is a map admitting a Moore-Postnikov tower of
	principal fibrations. Suppose that the fibre $F$ of $p$ has abelian
	fundamental group. Suppose there exists a CW pair $(W,A)$, a map $F: W
	\rightarrow Y $ and a $p$-lift $\tilde{F} : A \rightarrow X$.  Then if for all
	$i>0$, $H^{i+1}(W,A,\pi_{i}(F))$ vanishes, then the lift $\tilde{F}$ may be
	extended to a $p$-lift over $W$. Furthermore the extension is unique up to
	homotopy if $H^{i}(W,A; \pi_{i}(F))$ vanishes for all $i>0$.
\end{theorem}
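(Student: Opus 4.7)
The plan is to run the classical obstruction-theoretic argument stage-by-stage up the given Moore-Postnikov tower $\cdots \to X_n \to X_{n-1} \to \cdots \to X_0 = Y$, in which each stage $q_n\colon X_n \to X_{n-1}$ is a principal fibration with fiber $K(\pi_n(F), n)$. I would begin by recording what principality buys us: $X_n$ is the homotopy pullback of the path-loop fibration $PK(\pi_n(F), n+1) \to K(\pi_n(F), n+1)$ along a $k$-invariant $\kappa_n \in H^{n+1}(X_{n-1}; \pi_n(F))$. The hypothesis that $\pi_1(F)$ is abelian is what licenses the existence of such a principal refinement in the first place.

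For existence, I would induct on $n$, assuming a lift $\tilde F_{n-1}\colon W \to X_{n-1}$ extending $\tilde F|_A$ has already been constructed. The pulled-back class $\tilde F_{n-1}^*\kappa_n \in H^{n+1}(W; \pi_n(F))$ is the primary obstruction to promoting $\tilde F_{n-1}$ to a lift into $X_n$. Because a lift already exists on $A$, this class comes equipped with a preferred null-homotopy over $A$ and therefore refines to a relative class in $H^{n+1}(W, A; \pi_n(F))$, which vanishes by hypothesis. A choice of null-homotopy produces the next stage $\tilde F_n$. Passing to the inverse limit over $n$ yields the desired lift $W \to X$.

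For uniqueness, I would compare two lifts $\tilde G_0, \tilde G_1\colon W \to X$ of $F$ extending $\tilde F$ by comparing their images in the tower. After inductively homotoping them rel $A$ to agree through stage $X_{n-1}$, the obstruction to extending the homotopy one stage further lies in $H^n(W,A; \pi_n(F))$, since homotopy classes rel $A$ of lifts into a principal $K(\pi_n(F), n)$-fibration form a torsor over exactly this group. Vanishing in every positive degree then assembles these stagewise homotopies into a homotopy rel $A$ between $\tilde G_0$ and $\tilde G_1$.

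The principal difficulty I anticipate is not any single cohomological step but the bookkeeping required to promote stagewise conclusions about each $X_n$ to a conclusion about $X$ itself; this is the usual Milnor-type inverse-limit argument, controlled here by the CW hypothesis on $(W,A)$. Once that is in place, both halves of the theorem reduce to the two textbook facts invoked above: that the primary obstruction to lifting through a principal $K(\pi,n)$-fibration lives in $H^{n+1}(-;\pi)$, and that the homotopy classes of such lifts, when one exists, form an $H^n(-;\pi)$-torsor, applied relatively to $(W,A)$.
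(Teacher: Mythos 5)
Your proposal is correct and follows essentially the same route as the paper, which gives no independent argument but simply cites Hatcher (existence on p.~418, uniqueness as Ex.~24, p.~420); your stagewise induction up the principal Moore--Postnikov tower, with relative obstructions in $H^{i+1}(W,A;\pi_i(F))$ and the $H^{i}(W,A;\pi_i(F))$-torsor structure for uniqueness, is exactly that standard argument. The only cosmetic point is that your aside about $\pi_1(F)$ abelian ``licensing'' the principal refinement is redundant, since the statement already hypothesizes a principal tower.
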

It is known that $p:BU(n) \rightarrow BSO(n)$ admits a Moore-Postnikov tower of principal fibrations, and that the fiber is simply connected. It follows that, Theorem \ref{toplemma}  is applicable to this situation.

The manifold $M_{0} \#_{G} M_{1}$ can be written as the union
\[
	M_{0} \setminus U_{0} \cup V \cup M_{1} \setminus U_{1}
\]
where $V$ is diffeomorphic to $G \times S^{2n-k-1} \times [0,1]$.

Denote by
$J_{i}$ the invariant almost complex structures on $M_{i}$. Then $J_{i}$
restricted to $G \times S^{n-k-1} \times \{i\}$ is determined by $J_{i}$
restricted to $\{\mathbf 1\} \times S^{n-k-1} \times \{i\}$, where $\mathbf 1 
 \in G$ denotes the neutral element. Therefore it
suffices to extend the almost complex structure on $\{\mathbf 1\} \times S^{n-k-1}
\times \{0,1\}$ to an almost complex structure $J'$ on $\{ \mathbf 1\} \times
S^{2n-k-1} \times [0,1]$. This defines an invariant almost complex structure $J$
on $V$ by
\[
	J(t,v,r) := g_{\ast}(J(\mathbf 1, v, r))
\]
where $g_{\ast} \colon TV_{(\mathbf 1, v, r)} \to TV_{(t,v,r)}$ is the
differential of the action of $g \in G$. Note that $J$ coincides with $J_{i}$
restricted to $G \times S^{2n-k-1} \times \{ i\}$ and thus it defines an
invariant almost complex structure on $M_{0} \#_{G} M_{1}$ as claimed in the
theorem. We define now
\[
	W := \{\mathbf 1\} \times S^{2n-k-1} \times [0,1], \quad A = \{\mathbf 1\}
	\times S^{n-k-1} \times \{0,1\}.
\]
Since $(W,A)$ is a good pair (see \cite[p. 124]{H})
 and $W/A$ the
suspension of $S^{2n-k-1}$ we have for $i>0$
\[
	H^{\ast}(W,A;\pi_{i}(F)) \cong H^{\ast}(W/A; \pi_{i}(F)) = H^{\ast}(S^{2n-k};
	\pi_{i}(F)).
\]
For $\pi_{q}(\mathrm{SO}(2n)/U(n))$ does not depend on $n$ provided $q < 2n-1$
  and in this range
$\pi_{q}(\mathrm{SO}(2n)/U(n)) =0$ if $q \equiv 1,3,4,5$ mod $8$. This follows from Bott periodicity, see \cite[Page 432]{G} for the precise statement. Since $k>0$
we have that $\pi_{2n-k-1}(\mathrm{SO}(2n)/U(n))$ does not depend on $n$ and
therefore from Theorem \ref{toplemma} we have the existence of an invariant
almost complex structure if $2n-k \equiv 2,4,5,6$ mod $8$. 

Having recalled Kustarev's construction, we show that under certain conditions on the dimension of the manifold and the group acting the almost complex structure produced is unique.

\begin{theorem}
Suppose that additionally $2n-k = 5,6 \mod 8$. Then, there is a unique homotopy class of almost complex structures extending invariant almost complex structures on $M_{i} \setminus G$ ($i=1,2$).
\end{theorem}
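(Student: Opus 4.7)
The plan is to invoke the uniqueness clause of Theorem \ref{toplemma} in the same obstruction-theoretic setup that was used to prove existence. The first step is to reduce the equivariant problem to an ordinary lifting problem on a single slice: a $G$-invariant almost complex structure on the collar $V \cong G \times S^{2n-k-1} \times [0,1]$ is determined by its restriction to the slice $W := \{\mathbf{1}\} \times S^{2n-k-1} \times [0,1]$ via the formula $J(t,v,r) = g_\ast J(\mathbf{1},v,r)$, and the same formula promotes a homotopy on $W$ rel $A := \{\mathbf{1}\} \times S^{2n-k-1} \times \{0,1\}$ to a homotopy through $G$-invariant almost complex structures on $V$. Uniqueness of the invariant extension therefore amounts to uniqueness up to homotopy rel $A$ of the lift $W \to BU(n)$ of the tangent classifying map $W \to BSO(2n)$ through $p \colon BU(n) \to BSO(2n)$.

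The second step is to feed the pair $(W,A)$ into the uniqueness clause of Theorem \ref{toplemma}. That clause requires $H^i(W,A;\pi_i(F)) = 0$ for every $i > 0$, where $F \simeq SO(2n)/U(n)$ is the homotopy fiber of $p$. Since $(W,A)$ is a good pair and $W/A$ is homeomorphic to the unreduced suspension $S^{2n-k}$ of $S^{2n-k-1}$, this relative cohomology equals $H^i(S^{2n-k};\pi_i(F))$ and is concentrated in degree $i = 2n-k$. Hence the entire uniqueness question reduces to the single vanishing statement $\pi_{2n-k}(SO(2n)/U(n)) = 0$.

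The final step is Bott periodicity: in the stable range, $\pi_q(SO(2n)/U(n))$ vanishes precisely when $q \equiv 1, 3, 4, 5 \pmod 8$, and under the hypothesis on $2n-k$ modulo $8$ this gives the required vanishing. The only genuinely delicate point is the reduction from equivariant to non-equivariant uniqueness, which is routine once the averaging formula used in the existence argument is in place; in the extreme case $k=1$, where $q = 2n-1$ sits on the boundary of the range $q < 2n-1$ in which the homotopy groups are stably independent of $n$, the vanishing is verified by a direct low-dimensional identification such as $SO(6)/U(3) \cong \mathbb{CP}^3$ and the corresponding Hopf fibration computation $\pi_5(\mathbb{CP}^3) = \pi_5(S^7) = 0$.
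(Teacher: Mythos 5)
Your overall route is the same as the paper's: reduce the equivariant extension problem to a relative lifting problem on the slice $W=\{\mathbf 1\}\times S^{2n-k-1}\times[0,1]$ rel $A=\{\mathbf 1\}\times S^{2n-k-1}\times\{0,1\}$ using the averaging formula $J(t,v,r)=g_{\ast}J(\mathbf 1,v,r)$, invoke the uniqueness clause of Theorem \ref{toplemma}, identify $H^{i}(W,A;\pi_{i}(F))$ with $H^{i}(S^{2n-k};\pi_{i}(F))$, and finish with Bott periodicity. You are in fact more careful than the paper in isolating $\pi_{2n-k}(F)$ as the only group relevant to uniqueness (the paper's proof asks for $\pi_{2n-k}(F)=\pi_{2n-k+1}(F)=0$ and asserts this holds exactly for $2n-k\equiv 5,6 \bmod 8$, which does not match its own table of stable groups).

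There are, however, two genuine gaps. First, your closing claim that ``the hypothesis on $2n-k$ modulo $8$ gives the required vanishing'' is false in the $2n-k\equiv 6\bmod 8$ case of the statement as written: stably $\pi_{q}(SO(2n)/U(n))=0$ exactly for $q\equiv 1,3,4,5\bmod 8$, so for $2n-k\equiv 6$ one has $\pi_{2n-k}(F)\cong\mathbb{Z}$ and the uniqueness obstruction group $H^{2n-k}(S^{2n-k};\pi_{2n-k}(F))\cong\mathbb{Z}$ does not vanish. Your criterion, intersected with the existence range $2,4,5,6$ of Theorem \ref{T: Kustarev}, actually yields $2n-k\equiv 4,5\bmod 8$ --- the version of this theorem stated in the introduction --- so the argument proves nothing for $2n-k\equiv 6\bmod 8$ (the paper's proof has the same defect). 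Second, you rightly flag that for $k=1$ the degree $2n-k=2n-1$ sits outside the stable range $q<2n-1$, a point the paper silently ignores, but your proposed fix only covers $n=3$. For general $n\equiv 3\bmod 4$ the needed vanishing is false: from the fibration $U(n)\to SO(2n)\to SO(2n)/U(n)$, using $\pi_{2n-2}(U(n))=0$ and $\pi_{2n-1}(SO(2n))\cong\mathbb{Z}$ generated by the clutching map of $TS^{2n}$ (Euler number $2$), while the generator of $\pi_{2n-1}(U(n))\cong\mathbb{Z}$ maps to a bundle of Euler number $\pm(n-1)!$, one gets that $\pi_{2n-1}(SO(2n)/U(n))$ is cyclic of order $(n-1)!/2$; this is trivial for $n=3$ (consistent with $\pi_{5}(\mathbb{CP}^3)=0$) but equals $\mathbb{Z}_{360}$ already for $n=7$. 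So for $k=1$ the obstruction-theoretic uniqueness argument is valid only for $n=3$; for larger $n\equiv 3\bmod 4$ both your proof and the paper's leave uniqueness unestablished, and a different argument (or a weaker statement) is required.
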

\begin{proof}
In \cite{Ku} it was proven, that the problem of extending the almost complex
structure across the collar $G \times S^{2n-k-1} \times [0,1]$ equivariantly, is
equivalent to the (non-equivariant) relative lifting problem for maps from
$S^{2n-k-1}$ along the map $p$, relative to the boundary. The proof of this,
follows from the fact the group action is free on this region, so an invariant
almost complex structure is determined by its restriction to the slice
$\mathbf{1} \times S^{2n-k-1} \times [0,1]$.

By Theorem \ref{toplemma}, the uniqueness obstruction for the existence property for this lifting property is $$H^{i}(W/A,\pi_{i}(F)) .$$ 

 Since $W/A$ is homeomorphic to $S^{2n-k}$, the existence and uniqueness holds when $$\pi_{2n-k}(F) =  \pi_{2n-k+1}(F) = 0.$$ By the known form of the stable homotopy groups of $SO(2n)/U(n)$  \cite[Page 432]{G},  this happens exactly when $2n-k = 5,6 \mod 8$.

\end{proof}

We note that in the cases which are not covered by the above, the obstruction to
the uniqueness of almost complex structures lies either in $\mathbb{Z}$ or
$\mathbb{Z}_2$, depending on $n$ and $k$. Moreover, the uniqueness proved
above indeed applies to the original construction of Kustarev on $6$-manifolds
with circle action, and more generally.

\begin{corollary}
In case of a circle actions, the invariant almost complex structure of Theorem
\ref{T: Kustarev} on $M_{0} \#_{S^{1}} M_{1}$ is unique if $n$ equals $3$ mod $4$.
\end{corollary}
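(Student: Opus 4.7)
The plan is to derive this as an immediate specialization of the preceding theorem, simply by setting $k=1$ (the dimension of $S^1$) and tracking which residue classes of $n$ modulo $4$ make $2n-k$ fall into the good residue classes modulo $8$.

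First I would recall that the preceding theorem guarantees uniqueness of the invariant almost complex structure whenever $2n-k \equiv 5$ or $6 \pmod 8$. Plugging in $k=1$ turns this into the arithmetic condition $2n-1 \equiv 5 \pmod 8$ or $2n-1 \equiv 6 \pmod 8$. The second case is vacuous because $2n-1$ is odd, so only the first congruence matters, and it is equivalent to $2n \equiv 6 \pmod 8$, i.e.\ $n \equiv 3 \pmod 4$.

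Having verified the arithmetic, the final step is just to invoke the theorem: under the hypothesis $n \equiv 3 \pmod 4$, the obstruction groups $\pi_{2n-k}(F)$ and $\pi_{2n-k+1}(F)$ (with $F \simeq \mathrm{SO}(2n)/U(n)$) both vanish by the Bott periodicity computation cited from \cite[Page 432]{G}, so the relative lifting problem has a unique solution up to homotopy. Translating back via the equivariant reduction to the slice $\{\mathbf{1}\} \times S^{2n-2} \times [0,1]$ used in the proof of Theorem \ref{T: Kustarev}, this gives uniqueness of the invariant almost complex structure on $M_0 \#_{S^1} M_1$.

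There is no genuine obstacle here; the content is entirely contained in the preceding theorem, and the only thing to be careful about is the translation of the congruence condition on $2n-k$ into a condition on $n$ alone when $k=1$.
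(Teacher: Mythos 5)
Your proposal is correct and is essentially the paper's own (implicit) argument: the corollary is stated as an immediate specialization of the preceding uniqueness theorem, and your arithmetic—setting $k=1$, noting $2n-1$ is odd so only the residue $5 \bmod 8$ can occur, hence $n \equiv 3 \bmod 4$—is exactly the translation the paper intends. No further comment is needed.
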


\section{Proof of Theorem \ref{main}} \label{proofsec}
Before focusing on the Kustarev sum of two copies of $S^6$, and proving Theorem \ref{main}, we state a standard lemma for convenience regarding how fiber connect sum affects the fundamental group and homology of manifolds. A manifold with a circle action is called  $\mathbb{Q}$ resp. $\mathbb{Z}$-equivariantly formal if its equivariant cohomology (with coeffecients in $\mathbb{Q}$ resp. $\mathbb{Z}$) is a free $H^{*}(BS^1)$ module. It is known that if a manifold has a circle action with isolated, and non-empty fixed point set then $M$ being $\mathbb{Q}$ resp. $\mathbb{Z}$ equivariantly formal is equivelant to $H^{i}(M,\mathbb{Q}) = 0$ resp.  $H^{i}(M,\mathbb{Z}) = 0$ for all odd $i$ \cite[Lemma 2.1]{MP}.

\begin{lemma} \label{bet}
Suppose that $M_1,M_2$ are simply connected $6$-manifolds, having almost complex circle actions with isolated, and non-empty fixed point set.  Let $M$ be the Kustarev sum of $M_1$ and $M_{2}$. Then, $M$ is simply connected and, $$H_{3}(M,\mathbb{Z}) = H_{3}(M_{1},\mathbb{Z}) \oplus H_{3}(M_{2},\mathbb{Z}).$$ $$H_{2}(M,\mathbb{Z}) =  H_{2}(M_{1},\mathbb{Z}) \oplus H_{2}(M_{2},\mathbb{Z}) \oplus \mathbb{Z}.$$

In particular, \begin{enumerate}[label=(\alph*)]
\item $M$ is  $\mathbb{Q}$ resp. $\mathbb{Z}$ equivariatly formal $\iff$ both $M_{i}$ are $\mathbb{Q}$ resp. $\mathbb{Z}$ equivariatly formal. 
\item  $\chi(M)=4 $ and $M$ is $\mathbb{Q}$-equivariatly formal $\iff$ both $M_{i}$ are $\mathbb{Q}$-homology $6$-spheres.
\item $\chi(M)=4 $  and $M$ is $\mathbb{Z}$-equivariatly formal $\iff$ both $M_{i}$ are diffeomorphic to $S^6$.
\end{enumerate}
\end{lemma}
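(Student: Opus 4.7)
My plan is to compute the homology of the Kustarev sum $M$ via Mayer--Vietoris, first computing the homology of the complement of a free orbit in each $M_i$, and then to deduce the three corollary statements from the main formulas.

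For the first step, since $O_i$ is a free circle orbit it has an equivariant tubular neighborhood diffeomorphic to $S^1 \times D^5$, so by excision and K\"unneth one has $H_k(M_i, M_i \setminus O_i) \cong H_k(S^1 \times D^5, S^1 \times S^4)$, which is $\mathbb{Z}$ for $k = 5,6$ and zero otherwise. The long exact sequence of the pair, combined with $H_5(M_i) \cong H^1(M_i) = 0$ from simple connectedness and Poincar\'e duality, yields $H_k(M_i \setminus O_i) \cong H_k(M_i)$ for $k \leq 3$, $H_5 = H_6 = 0$, and a short exact sequence $0 \to \mathbb{Z} \to H_4(M_i \setminus O_i) \to H_4(M_i) \to 0$ that splits since $H_4(M_i) \cong H^2(M_i)$ is free. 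Moreover $M_i \setminus O_i$ remains simply connected because $O_i$ has codimension $5 \geq 3$.

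Next I write $M = U \cup V$ with $U \simeq M_1 \setminus O_1$, $V \simeq M_2 \setminus O_2$, and $U \cap V \simeq S^1 \times S^4$. Van Kampen gives $\pi_1(M) = 0$, and Mayer--Vietoris immediately yields $H_3(M) \cong H_3(M_1) \oplus H_3(M_2)$ since $H_3$ and $H_2$ of $S^1 \times S^4$ vanish. For $H_2$ the exact sequence $0 \to H_2(M_1) \oplus H_2(M_2) \to H_2(M) \to H_1(S^1 \times S^4) \cong \mathbb{Z} \to 0$ splits, giving the claimed formula. Tracing Mayer--Vietoris further also gives $H_5(M) = 0$, using that the generator of $H_4(U \cap V)$ maps injectively into the extra $\mathbb{Z}$-summand of $H_4(M_i \setminus O_i)$ computed above as the link class of $O_i$.

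For the corollaries I use the criterion stated just before the lemma that equivariant formality in the isolated non-empty fixed-point setting is equivalent to vanishing of odd cohomology; by Poincar\'e duality and simple connectedness this reduces to the vanishing of $H_3$, so (a) follows directly from the $H_3$ splitting. For (b), one has $\chi(M) = \chi(M_1) + \chi(M_2)$ (either from the Mayer--Vietoris calculation or by counting fixed points), and a simply connected $\mathbb{Q}$-equivariantly formal $6$-manifold $M_i$ satisfies $\chi(M_i) \geq 2$ with equality iff it is a rational homology $6$-sphere; hence $\chi(M_i) = 2$ for both $i$ forces both to be rational homology spheres. For (c), the hypothesis $H_3(M_i, \mathbb{Z}) = 0$ together with universal coefficients and Poincar\'e duality forces $H_2(M_i, \mathbb{Z})$ to be torsion-free, which combined with $b_2 = 0$ gives $H_2(M_i, \mathbb{Z}) = 0$, so $M_i$ is a simply connected integral homology $6$-sphere; the high-dimensional Poincar\'e conjecture and $\Theta_6 = 0$ then give $M_i \cong S^6$. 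The converses in (b) and (c) are immediate from the main formulas. The only mildly technical points are verifying the injectivity of $H_4(U \cap V) \to H_4(U) \oplus H_4(V)$ needed for $H_5(M) = 0$, and splitting the short exact sequences using freeness of the relevant quotients.
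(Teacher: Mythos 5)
Your proof is correct and follows essentially the same route as the paper: Mayer--Vietoris and Van Kampen applied to the decomposition into orbit complements for the homology formulas and parts (a)--(b), then Poincar\'e duality and universal coefficients to upgrade a $\mathbb{Z}$-equivariantly formal rational homology sphere to an integral homology sphere in (c). The only (immaterial) difference is the last step of (c), where you invoke the high-dimensional Poincar\'e conjecture together with the triviality of the group of homotopy $6$-spheres, while the paper cites the Wall--Jupp--Zubr classification.
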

\begin{proof}
The proof of the homology formulas, and a)-b) are an elementary applications of Mayer-Vietoris and Seifert-Van-Kampen. To prove c), firstly by Poincar\'{e} duality and the universal coefficient theorem, if $N$ is a simply connected rational homology $S^{6}$, and $0<i<6$ then $H_{i}(N,\mathbb{Z}) \neq 0$ can only happen for $i =2,3$, and furthermore $H_{2}(N,\mathbb{Z}) \cong H_{3}(N,\mathbb{Z}) \cong H^{3}(N,\mathbb{Z})$. Since  each of $M_{i}$ must be a $\mathbb{Q}$-homology $6$-sphere by b) and $\mathbb{Z}$-equivariantly formal by a), both of these two homology groups vanish. That is, each $M_{i}$ is a simply connected integral homology $S^6$, therefore diffeomorphic to $S^6$, for example by the Wall-Jupp-Zubr theorem.
\end{proof}

\subsection{Kustarev sum of two copies of $S^6$} \label{surgery}

In the remainder of this section we prove Theorem \ref{main}, the general overview of the argument is as follows. The Kustarev sum of two manifolds is a fiber connect sum of two manifolds along embedded circles. The fiber connect sum depends on the embedding of the circles up to ambient isotopy and a choice of framing of the embedded circles on both sides.

 In the first subsection, we recall the notion of a framing of an embedded submanifold and the definition of the trivial framing for a sphere embedded in a higher dimensional sphere, following \cite{Ko}. Next, we observe that free orbits for a smooth circle action have a canonical framing which is unique up to homotopy. 

In the following section, we recall the definition of the standard almost complex circle actions on $S^6$, and determine that the framing associated to the free orbits of the actions is the non-trivial one (since $\pi_{1}(GL_{5}(\mathbb{R})) = \mathbb{Z}_2$ it follows that there are two classes of framings, see the first subsection).

Then, we show that the fiber connect sum of two copies of $S^6$ along two subcircles with the non-trivial framing is diffeomorphic to $S^4 \times S^2$. This involves expressing the fiber connect sum as the result of surgery on the $6$-sphere, which is in turn diffeomorphic to the boundary of a certain handle attachment of the $7$-disk which may be determined explicitly.

\subsection{Framings and fiber connect sum}
We follow the definition of fiber connect sum and its notation presented in
\cite[Page 99]{Ko} throughout the remainder of the
article. In the case that $N= S^1$ and $E$ is the trivial vector bundle, the
definition of  \cite[Page 99]{Ko} is as follows:

\begin{definition}\label{fcs}  Let $M_0$ and $M_1$ be two orientable, closed $n$-manifolds and $\varphi_i
		\colon S^1 \times \mathbb{R}^{n-1} \to M_i$ two embeddings.
	Identify $S^1 \times \mathbb{R}^{n-1}\setminus (S^1 \times 0)$ with $S^1
		\times S^{n-2} \times (0,\infty)$ via the standard diffeomorphism. Then we define $M_0 \#_{S^1} M_1$ as 
	\[
		(M_0 \setminus
		\varphi_0(S^1 \times 0) \cup M_1 \setminus \varphi_1(S^1 \times 0))/\sim
	\]
	where $\varphi_0(z,p,t) \sim \varphi_1(z,p,\frac{1}{t})$ for $(z,p,t) \in S^1 \times
		S^{n-2} \times (0,\infty)$.
\end{definition}

The choice of an embedding is known as a framing and is equivalent to a framing of the normal bundle of $\varphi_{i}(S^1 \times \{0\})$. Recall that a framing of the normal bundle is equivalent to a trivialisation of the normal bundle. Because, for $n \geq 3$, $\pi_{1}(GL_{n}(\mathbb{R})) = \mathbb{Z}_2$, when the manifolds have dimension at least $4$ for a given embedded circle there are exactly two choices of framing.

\begin{definition} \label{trivfr}
Consider the embedded $S^1 \subset S^6$  given by $S_{0} = \{(u,v,0,0,0,0,0) \in S^6| u^2 + v^2 =1\}$. Consider the vector subbundle $E$ of $T\mathbb{R}^7$ whose fiber over each point is the linear subspace $\{x\in \mathbb{R}^7 |x_{1}=x_{2}=0\}$. Then, note that the normal bundle $N$ of $S_0$ is the restriction of $E$ to $S_0$. The \textit{trivial framing} of $S_0$ is defined to be the framing of $N$ given by $e_{3}, \ldots, e_{7}$ over every point, where $e_{i}$ denotes the standard basis of $\mathbb{R}^7$.
\end{definition}

Because, any two embeddings of $S^1$ to $S^6$ are ambient isotopic, and ambient isotopies respect homotopy types of framings, a framing of an embedded $S^1$ in $S^6$ is called trivial when it is ambient isotopic to the one from Definition \ref{trivfr}.

\subsection{Uniqueness of equivariant framings}

In this section we consider a natural framing of free orbits a manifold with a circle action,  and show that it is unique up to homotopy. In the coming section, we will determine this homotopy type of the framing for almost complex circle action on $S^6$.

\begin{definition}
Let $M$ be a manifold with a circle action, and $O$ a free orbit. Then, a framing $F$ of $O$ is called \textbf{equivariant} if it is of the from $$g_{*}(v_{1}), \ldots g_{*}(v_{k}),$$ where $v_{1},\ldots,v_{k}$ is a basis of the normal bundle of $O$ at a point in $O$.   
\end{definition}

In the following lemma, we show that equivariant framings yield a unique homotopy class:

\begin{lemma}
Suppose that $M$ is an $n$-manifold with a smooth $S^1$-action.  Then, for each free orbit $O$ any two equivariant framings of $O$ are homotopic.
\end{lemma}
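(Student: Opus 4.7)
The approach is to use the freeness of the $S^{1}$-action on $O$ to produce a canonical trivialisation of the normal bundle $N \to O$ under which every equivariant framing pulls back to a constant section of the associated frame bundle. Once this trivialisation is in place the result is a statement about constant loops in a Lie group.

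First, I would fix a point $p \in O$. Since the action on $O$ is free and transitive, the orbit map $S^{1} \to O$, $g \mapsto g \cdot p$, is a diffeomorphism, and the differentials $g_{\ast} \colon N_{p} \to N_{g \cdot p}$ assemble into a bundle isomorphism
\[
	\Phi \colon S^{1} \times N_{p} \longrightarrow N, \qquad \Phi(g,v) = g_{\ast} v,
\]
whose well-definedness uses precisely the freeness of the action. Under $\Phi$ the frame bundle of $N$ becomes the trivial principal $GL_{n-1}(\RR)$-bundle $S^{1} \times \mathrm{Fr}(N_{p})$, and the equivariant framing determined by a basis $(v_{1}, \ldots, v_{n-1})$ of $N_{p}$ corresponds to the constant section $g \mapsto (v_{1}, \ldots, v_{n-1})$. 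An equivariant framing based at a different point $p' = g_{0} \cdot p$ with basis $(w_{i})$ can be rewritten as one based at $p$ via the basis $((g_{0}^{-1})_{\ast} w_{i})$, so there is no loss of generality in comparing two arbitrary equivariant framings in the same trivialisation centred at $p$.

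Finally, two constant sections of $S^{1} \times \mathrm{Fr}(N_{p})$ are homotopic as framings precisely when the two corresponding bases of $N_{p}$ may be joined by a path in $\mathrm{Fr}(N_{p}) \cong GL_{n-1}(\RR)$. Any two bases inducing the same orientation are so joined by path-connectedness of $GL_{n-1}^{+}(\RR)$, and under the orientation conventions on framings implicit in the statement this suffices. The argument is essentially bookkeeping with the slice-theorem trivialisation; the only conceptual point is the use of freeness to make $\Phi$ well-defined, so I do not expect a serious technical obstacle.
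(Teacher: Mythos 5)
Your proof is correct, and it rests on the same mechanism as the paper's: equivariance forces a framing to be ``constant'' in the trivialisation furnished by the group action, and a constant framing represents the trivial class. The implementation differs, though. The paper argues by contradiction via the equivariant slice theorem: two non-homotopic equivariant framings would yield an $S^1$-equivariant self-diffeomorphism $\phi$ of the model $D^{n-1}\times S^1$ carrying the trivial framing class of $\{0\}\times S^1$ to the non-trivial one, while equivariance of $\phi$ forces $\phi_{*}$ of a constant framing to stay constant. You instead work directly on the orbit: freeness makes the map $S^1\times N_p \to N$, $(g,v)\mapsto g_{*}v$, a well-defined bundle isomorphism, under which every equivariant framing is literally a constant section, and the comparison of two constants reduces to path-connectedness of $GL^{+}_{n-1}(\mathbb{R})$. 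Your route is more self-contained (no slice theorem, no contradiction) and makes the role of freeness explicit; the paper's version fits its later use, where framings are transported through slice-theorem tubular neighbourhoods anyway. The orientation caveat you flag is not a defect relative to the paper: the paper's own proof has the same implicit convention, since only the class in $\pi_1(GL_{n-1}(\mathbb{R}))\cong\mathbb{Z}_2$ (trivial versus non-trivial framing) is used in the fiber connect sum arguments, and a constant change of basis, even orientation-reversing, does not affect that class.
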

\begin{proof}
Suppose for a contradiction that there were two such non-homotopic framings. Then, by the equivariant slice theorem there would exist an $S^1$-equivariant self-diffeomorphism $\phi$ of $D^{n-1} \times S^1$, with action $z.(p,w) = (p,zw)$ which maps the class of trivial framing of $\{0\} \times S^1$ to the non-trivial one. But, choosing $F$ to be a constant framing, and applying the $S^1$-equivariance of $\phi$ directly shows that $\phi_{*}F$ is a constant framing, hence represents the same homotopy class as $F$, which is the desired contradiction.

\end{proof}

\subsection{Computation of equivariant framing}\label{comfram}
In this subsection, we compute the class of the equivariant framing for the standard almost complex $S^1$-actions on $S^6$. We will use this is in the next subsection to determine the diffeomorphism type of the Kustarev sum of two copies of $S^6$.

\begin{theorem} \label{nonframe}
Consider the standard almost complex $S^1$ actions on $S^6$. Then, the homotopy class of the equivariant framing of a free orbit is the non-trivial one.
\end{theorem}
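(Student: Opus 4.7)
The plan is an explicit computation. I place $S^6\subset\mathbb{C}^3\oplus\mathbb{R}$ and write the almost complex $S^1$-action as
\[
e^{i\theta}\cdot(z_1,z_2,z_3,t)=(e^{ia\theta}z_1,e^{ib\theta}z_2,e^{-i(a+b)\theta}z_3,t)
\]
for positive integers $a,b$ (this is Jang's case (d) applied to one copy of $S^6$). The simplest instance is $(a,b)=(1,b)$: the orbit through $p_0=(1,0,0,0)$ has trivial stabilizer, and as an embedded subset of $S^6\subset\mathbb{R}^7$ it coincides with the standard unit circle $S_0\subset\mathbb{R}^7$ of Definition \ref{trivfr} in the $(x_1,x_2)$-plane. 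No ambient isotopy is then needed to compare framings.

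At $p_0$ the tangent space to $S^6$ is spanned by $e_2,\ldots,e_7$, the orbit direction is $e_2$, and the normal to the orbit inside $TS^6$ is spanned by $e_3,e_4,e_5,e_6,e_7$. The equivariant framing at $p(\theta)=e^{i\theta}\cdot p_0\in S_0$ is the pushforward of this basis by the ambient rotation on $\mathbb{R}^7$, which is block-diagonal: rotation by $\theta$ on $(x_1,x_2)$, by $b\theta$ on $(x_3,x_4)$, by $-(1+b)\theta$ on $(x_5,x_6)$, and trivial on $x_7$. Comparing to the trivial framing $(e_3,\ldots,e_7)$ (constant in ambient coordinates), the difference is the loop
\[
M(\theta)=R^{(1,2)}_{b\theta}\oplus R^{(3,4)}_{-(1+b)\theta}\oplus\mathrm{Id}\;\in\; SO(5),\qquad \theta\in[0,2\pi],
\]
where $R^{(i,j)}_\alpha$ denotes rotation by $\alpha$ in the $(i,j)$-plane. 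The class of $M$ in $\pi_1(SO(5))=\mathbb{Z}_2$ is the sum of the winding numbers of its two planar blocks modulo $2$, namely $b+(-(1+b))=-1\equiv 1\pmod 2$, the non-trivial element.

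For general weights $(a,b,-a-b)$ with $a>1$, no orbit of the above form is free, so one picks a free orbit through a generic point $(z_1,z_2,z_3,0)$ with all $z_j\neq 0$ (after reducing $S^1$ by the common kernel so that $\gcd(a,b)=1$). I expect the main technical obstacle to be here: the orbit is only ambient-isotopic, not equal, to $S_0$, so the comparison with the trivial framing requires either an explicit ambient isotopy in $S^6$ under which the framing is transported, or extracting the $\mathbb{Z}_2$-invariant of a framed circle in $S^6$ directly from a constant trivialization of $\nu_O\oplus\nu_{S^6/\mathbb{R}^7}|_O$ in $\mathbb{R}^7$. In either approach the block-diagonal structure of the action reduces the computation to a sum of weight-dependent winding numbers, and an elementary algebraic check shows this sum is again odd.
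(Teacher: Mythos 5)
Your computation for the weights $(1,b,-1-b)$ is correct and is a nice direct check: since the free orbit through $(1,0,0,0)$ coincides with the coordinate circle $S_0$ of Definition \ref{trivfr}, the equivariant framing can be compared with the constant framing $e_3,\dots,e_7$ inside the rank-$5$ normal bundle, and the relative loop $R_{b\theta}\oplus R_{-(1+b)\theta}\oplus 1$ has odd total winding, hence is non-trivial in $\pi_1(SO(5))=\mathbb{Z}_2$. However, the theorem is needed (and is used, in the proof of Corollary \ref{exotic}) for weights with $a,b>1$, where no coordinate-circle orbit is free, and this is exactly the part you leave as a sketch. The gap is genuine: for a free orbit through a generic point with all $z_j\neq 0$, the normal bundle of the orbit inside $TS^6$ is \emph{not} a direct sum of the invariant coordinate $2$-planes (the position and velocity vectors have components in all three complex lines), so there is no block-diagonal loop and no preferred constant trivialization against which to read off ``a sum of weight-dependent winding numbers''; the claim that ``an elementary algebraic check shows this sum is again odd'' is not something you can currently write down. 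To close it you must either construct an explicit ambient isotopy carrying the generic orbit to $S_0$ and transport the framing (not done), or carry out your second suggestion in a canonical form.

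That second suggestion is essentially the paper's route, and it is worth seeing why it avoids the problem. The paper stabilizes a normal framing $F$ of a circle $C\subset S^6$ to the full tangent framing $[c(t),\dot c(t),F(t)]$ of $T\mathbb{R}^7|_C$, which is canonically trivialized by the standard basis of $\mathbb{R}^7$, so its class in $\pi_1(GL_7(\mathbb{R}))$ is well defined and ambient-isotopy invariant. One computes once, on the standard circle, that this stabilization map swaps the trivial and non-trivial classes (the extra block is a single winding-one rotation). Then, because the $S^1$-action is the restriction of a \emph{linear} action on $\mathbb{R}^7$, the stabilized equivariant framing along \emph{any} free orbit is just the loop of block rotations with weights $-a,b,a+b$ (and a fixed coordinate), whose total winding $2b$ is even, hence trivial; so the equivariant normal framing is non-trivial for all weights. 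Your $a=1$ computation is consistent with this and arguably more concrete in that case, but to prove the stated theorem you need either this stabilization argument (including the parity-swap computation for the added $c,\dot c$ block) or an explicit isotopy argument for generic orbits.
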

\begin{proof}
We recall the definition of  almost complex $S^1$-actions on $S^6$ following \cite[Example 1.9.1]{GZ}. It holds that $S^6 = G_2/ \mathrm{SU}(3)$ and $S^6$ admits an invariant almost complex structure. Let $T^2 \subset G_{2}$ be a Cartan subgroup, then the induced action on $S^6$ may be realized as the restriction of a linear action on $\mathbb{R}^7$ as follows: For $(t,s) \in T^2 = S^1 \times S^1$ and $(z_1,z_2,z_3,x) \in S^6$ we have
	\[
		(t,s) \cdot (z_1,z_2,z_3, x) = (t^{-1}z_1,sz_2,(ts)z_3,x).
	\]
	This action has two isolated fixed points $(0,0,0,\pm 1)$. Consider the
	subcircle $S^1 \to T^2$, $t \mapsto (t^a,t^b)$, $a,b \in \mathbb{Z}$, $a,b \neq 0$, which gives an almost complex circle action on $S^6$:
	\[
		t \cdot (z_1,z_2,z_3, x) = (t^{-a}z_1,t^bz_2,t^{a+b}z_3,x).
	\]

To determine the class of the equivariant (normal) framing we note that for a
given embedded circle $C$ in $S^6$, there is a natural map $\Psi$ sending normal
framings $F$ of $C$ in $S^6$ to framings of $T \mathbb{R}^7|_C$, given by $$\Psi( F(t)) = [c(t),\dot{c}(t),F(t)] .$$

It is not hard to check that this map is invariant with respect to ambient isotopy, by extending the ambient isotopy of $S^6$ radially on a neighborhood of $S^6 \subset \mathbb{R}^7$. 

Note that a tangent framing of a circle in $\mathbb{R}^7$ is the same as a map $S^1 \rightarrow GL_{7}(\mathbb{R})$, after choosing a point on the circle and a basis of the tangent space at that point. Moreover the class in $\pi_{1}(GL_{7}(\mathbb{R}))$ is independent of such choices. Such a tangent framing is called trivial if it represents the trivial element in  $\pi_{1}(GL_{7}(\mathbb{R}))$.

Determining the class of the equivariant normal framing will be done by computing the image of this class by $\Psi$. First, it is necessary to determine how $\Psi$ acts on homotopy classes of framings. Because, $\Psi$ is invariant with respect to ambient isotopy, it is necessary just to do this for a given choice of embedded circle and framing. Taking $C$ to be a standard coordinate subcircle in the first two co-ordinates with trivial framing $e_{3},\ldots,e_7$.

Then, the image under $\Psi$ under this framing, is the framing associated to the map $\phi: S^1 \rightarrow GL_{7}(\mathbb{R})$, $$\phi(t) =  \begin{pmatrix}
			R(t) & 0       \\
			0    & I_{5} \\
		\end{pmatrix}$$ 

Where $R(t)$ is the standard 2 by 2 rotation matrix associated to $t \in S^1$, and $I_{5}$ is the 5 by 5 identity matrix. Clearly, this represents the non-trivial element in  $\pi_{1}(GL_{7}(\mathbb{R}))$. Therefore $\Psi$ maps the class of the trivial normal framing in $S^6$ to the class of the non-trivial tangent framing in $\mathbb{R}^7$, an almost identical argument shows it sends the non-trivial class to the trivial one.

Finally, it remains to determine the class of the image of the equivariant normal framing of this circle action under $\Psi$. To do this, note that because the circle action extends to a linear circle action on $\mathbb{R}^7$, the way that this action acts on tangent vectors is exactly by the linear action itself. 

Picking a point $p=c(0)$ on $C$ and defining a continuous path of transformations of $T\mathbb{R}^7_{p}$ which deform a basis of the form $c(0), \dot{c}(0), F(0)$ to the standard basis of $\mathbb{R}^7$, shows that this class of the image of the equivariant normal framing under $\Psi$, is the class of the map $F: S^1 \rightarrow GL_{7}(\mathbb{R})$,

$$F(t) = \begin{pmatrix}
			R_{-a}(t) & 0      & 0   & 0  \\
			0      & R_b(t) & 0   & 0  \\
			0      & 0      & R_{a+b}(t) & 0  \\
			0      & 0      & 0   & 1
		\end{pmatrix} $$

Finally, because $-a + b + a + b = 2b$ is even, the class of this loop is trivial in $\pi_{1}(GL_{7}(\mathbb{R})).$ That is the image under $\Psi$ of the equivariant normal framing is the trivial one, showing by the above considerations about $\Psi$, that the class of the equivariant normal framing is the non-trivial one.

\end{proof}

\subsection{Determining the diffeomorphism type: Surgery and Handle attachments}

The following Lemma shows that the fiber connect sum of two copies of $S^6$ with the non-trivial framing and the fiber connect sum of two copies of $S^6$ with the trivial framing are diffeomorphic.

\begin{lemma} \label{revfr}
Consider an embedded circle $C$ in $S^6$. Let $h_{1}$ denote the trivial framing of $C$ and $h_{2}$ the non-trivial framing. Then, the fiber connect sum $(S^6,C,h_{1}) \# (S^6,C,h_{1})$ is diffeomorphic to the fiber connect sum  $(S^6,C,h_{2}) \# (S^6,C,h_{2})$.
\end{lemma}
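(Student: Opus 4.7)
The plan is to show that the two fiber connect sums in the statement are in fact literally equal as quotient spaces, not merely diffeomorphic, once one chooses representative embeddings appropriately. The mechanism is that a simultaneous twist of both embeddings by the same loop $\gamma\colon S^1 \to \mathrm{SO}(5)$ preserves the gluing identification of Definition \ref{fcs}. I would begin by recalling that the fiber connect sum depends on the embeddings $\varphi_i$ only up to ambient isotopy, and that two embeddings $\varphi,\varphi'\colon S^1\times\mathbb{R}^5\to S^6$ with the same image $C$ differ up to isotopy by a reparametrization of the form $\Gamma_\gamma(z,v)=(z,\gamma(z)v)$ for some loop $\gamma\colon S^1\to\mathrm{SO}(5)$; the two framings lie in the same homotopy class iff $\gamma$ is null-homotopic, and otherwise $\gamma$ represents the nontrivial element of $\pi_1(\mathrm{SO}(5))=\mathbb{Z}/2$.

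Next I would carry out the key calculation. Fix any pair of embeddings $\varphi_0,\varphi_1\colon S^1\times\mathbb{R}^5\to S^6$ realising the framing class $h_1$, pick $\gamma$ representing the nontrivial class of $\pi_1(\mathrm{SO}(5))$, and define the twisted embeddings $\varphi_i':=\varphi_i\circ\Gamma_\gamma$, which now realise the framing class $h_2$ on both sides. In the polar coordinates $(z,p,t)\in S^1\times S^4\times(0,\infty)$ of Definition \ref{fcs}, the orthogonality of $\gamma(z)\in\mathrm{SO}(5)$ implies that $\gamma(z)$ commutes with the polar splitting $v\leftrightarrow(v/|v|,|v|)$, and the gluing relation for $(\varphi_0',\varphi_1')$ becomes
\[
\varphi_0\bigl(z,\gamma(z)p,t\bigr)\;\sim\;\varphi_1\bigl(z,\gamma(z)p,1/t\bigr).
\]
Since $\gamma(z)$ acts bijectively on $S^4$ for each $z$, substituting $q=\gamma(z)p$ recovers the original identification $\varphi_0(z,q,t)\sim\varphi_1(z,q,1/t)$ used to form the $(\varphi_0,\varphi_1)$-sum. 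Hence the $(h_2,h_2)$-fiber sum and the $(h_1,h_1)$-fiber sum are literally the same quotient of $(S^6\setminus C)\sqcup(S^6\setminus C)$, which proves the lemma.

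The only subtlety is bookkeeping around the polar-coordinate description: one needs $\gamma(z)$ to lie in $\mathrm{SO}(5)$ rather than $GL_5(\mathbb{R})$ in order to make the twist commute with the splitting $\mathbb{R}^5\setminus\{0\}\cong S^4\times(0,\infty)$. This is harmless, since any representative of $\pi_1(GL_5(\mathbb{R}))$ can be deformed into $\mathrm{SO}(5)$ without changing the resulting framing class; the inclusion $\mathrm{SO}(5)\hookrightarrow GL_5^+(\mathbb{R})$ is a homotopy equivalence. I do not anticipate any further obstacle, since the argument reduces the lemma to the verification of a single substitution in the gluing formula.
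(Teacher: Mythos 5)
Your proof is correct and is essentially the paper's argument: the paper realizes the framing change by the explicit fiberwise rotation $h(z_1,z_2,z_3,t)=(z_1,z_1z_2,z_3,t)$ (a specific representative of your loop $\gamma$ in $\mathrm{SO}(5)$) and verifies $h\circ\alpha_E\circ h^{-1}=\alpha_E$, which is exactly your observation that an orthogonal fiberwise twist commutes with the radial gluing map, so the two quotients coincide. The only difference is that you phrase it for an arbitrary loop in $\mathrm{SO}(5)$ with a substitution $q=\gamma(z)p$, while the paper checks the same commutation on a concrete formula.
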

\begin{proof}
Fix an embedding $\phi : S^1 \times \mathbb{R}^5 \rightarrow N_{c}$, corresponding to the trivial framing of $C$, where $N_C$ is a neighbourhood of $C$ in $S^6$. 

Set coordinates of $S^1 \times \mathbb{R}^5$ by identifying is as a subset of $\mathbb{C}^3 \times \mathbb{R}$, given by $\{(z_{1},z_2,z_3,t) \in \mathbb{C}^3 \times \mathbb{R} | |z_{1}|=1\}.$ Define a map $h: S^1 \times \mathbb{R}^5 \rightarrow S^1 \times  \mathbb{R}^5$ given by the formula $$h(z_1,z_{2},z_{3},t ) = (z_1,z_1 z_{2},z_{3},t ) . $$

Note that $h$ is a diffeomorphism with inverse $h^{-1}(z_1,z_{2},z_{3},t ) = (z_1,z_1^{-1} z_{2},z_{3},t ) . $

Then, $\phi \circ h : S^1 \times \mathbb{R}^5 \rightarrow N_{c}$ is an embedding corresponding to the non-trivial framing of $C$. Following the definition of fiber connect sum, set $\alpha: (0,\infty) \rightarrow (0,\infty)$ an orientation reversing diffeomorphism, and set $\alpha_{E} :  (S^1 \times \mathbb{R}^5) \setminus (S^1 \times \{0\}) \rightarrow  (S^1 \times \mathbb{R}^5) \setminus (S^1 \times \{0\})$; $$\alpha_{E}(z_{1},z_{2},z_{3},t) = (z_{1},0,0,0) +  \frac{\alpha(|(z_2,z_3,t)|)}{|(z_2,z_3,t)|} (0,z_2,z_3,t).$$

By definition $(S^6,C,h_{1}) \# (S^6,C,h_{1})$ is diffeomorphic to two copies of $S^6 \setminus C$ glued along $N_{C} \setminus C$ by $\phi \circ \alpha_{E} \circ \phi^{-1}$. Similarly $(S^6,C,h_{2}) \# (S^6,C,h_{2})$ is diffeomorphic to two copies of $S^6 \setminus C$ glued along $N_{C} \setminus C$ by $\phi \circ h \circ \alpha_{E} \circ h^{-1} \circ \phi^{-1}$. Then, one may check directly from the formulas that  $$h \circ \alpha_{E} \circ h^{-1} = \alpha_E.$$ The gluing maps are equal and the two spaces are diffeomorphic.

\end{proof}

Since, we have proven in the previous section, that the Kustarev sum $M$ is the fiber connect sum of two copies of $S^6$ with the non-trivial framing. By Lemma \ref{revfr} it is diffeomorphic to the fiber connect sum of two copies of $S^6$ with the trivial framing. 

Next, the following fact asserted in \cite[Page 112]{Ko} shows that $M$ is diffeomorphic to the surgery on $S^6$ along an embedded circle with the trivial framing.

\begin{lemma} \label{surgery}  \cite[Page 112]{Ko} Let M be an $n$- manifold with an embbedded sphere $S$ and framing $h$. The fiber connect sum of $M$ with $S^n$, where $S \subset M$ has framing $h$ and the co-ordinate subsphere of $S^n$ has the trivial framing, is diffeomorphic to the surgery on M along $(S,h)$.
\end{lemma}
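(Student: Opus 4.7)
The plan is to realise the complement of a tubular neighbourhood of the coordinate subsphere in $S^n$ as the ``dual'' handle $D^{k+1}\times S^{n-k-1}$, so that the fiber connect sum with $S^n$ reduces to the standard surgery construction.

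First, I would invoke the standard genus splitting
\[
S^n \;=\; (S^k \times D^{n-k}) \,\cup_{\mathrm{id}}\, (D^{k+1} \times S^{n-k-1}),
\]
in which the two pieces are glued along their common boundary $S^k \times S^{n-k-1}$ by the identity. The first piece is precisely a tubular neighbourhood of the coordinate subsphere $S^k \subset S^n$ equipped with the trivial framing, in the sense generalising Definition \ref{trivfr}. Consequently, the complement in $S^n$ of the open tube is canonically diffeomorphic to $D^{k+1} \times S^{n-k-1}$, with boundary the common $S^k \times S^{n-k-1}$.

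Next, I would unwind the fiber connect sum following Definition \ref{fcs} and its direct higher-dimensional analogue. Let $\phi \colon S^k \times \mathbb{R}^{n-k} \to M$ realise the framing $h$ and let $\phi_0 \colon S^k \times \mathbb{R}^{n-k} \to S^n$ realise the trivial framing of the coordinate $S^k$. By definition the fiber connect sum is
\[
\bigl(M \setminus \phi(S^k \times \{0\})\bigr)\;\cup\;\bigl(S^n \setminus \phi_0(S^k \times \{0\})\bigr)\,\big/\sim,
\]
with $\phi(z,p,t) \sim \phi_0(z,p,1/t)$ in the collars. Collapsing the open collars and using the genus splitting, this is precisely the manifold obtained by removing $\phi(S^k \times \mathrm{int}(D^{n-k}))$ from $M$ and gluing in $D^{k+1} \times S^{n-k-1}$ along $S^k \times S^{n-k-1}$ via the identity. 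That is exactly the definition of surgery on $M$ along $(S,h)$.

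The only bookkeeping that requires care is to check that the orientation-reversing collar identification $t \mapsto 1/t$ in Definition \ref{fcs}, composed with the canonical identification of $S^n \setminus \phi_0(S^k \times \{0\})$ with $D^{k+1} \times S^{n-k-1}$, produces the standard surgery gluing rather than a twisted variant. Any discrepancy is a self-diffeomorphism of the collar $S^k \times S^{n-k-1} \times (0,\infty)$ which extends over $D^{k+1} \times S^{n-k-1}$ (for instance by radial inversion in the $D^{k+1}$ factor together with an isotopy), and therefore does not affect the resulting diffeomorphism type. This matching of conventions is the only genuine obstacle, and is routine once the pieces are set up as above.
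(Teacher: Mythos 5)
Your argument is correct, and in fact the paper offers no proof of this lemma at all: it is quoted verbatim from Kosinski \cite[p.~112]{Ko} as a known fact, so your write-up supplies the standard justification rather than an alternative to anything in the paper. The route you take --- the genus splitting $S^n=(S^k\times D^{n-k})\cup_{\mathrm{id}}(D^{k+1}\times S^{n-k-1})$, identifying the first piece with the trivially framed tube around the coordinate subsphere and its complement with the dual handle, then cutting the fiber connect sum down to a boundary gluing --- is precisely the classical argument behind the cited statement. One small point could be tightened: in your last paragraph the claim that ``any discrepancy is a self-diffeomorphism of the collar which extends over $D^{k+1}\times S^{n-k-1}$'' is stated too broadly (an arbitrary self-diffeomorphism of $S^k\times S^{n-k-1}\times(0,\infty)$ need not extend), but in fact no discrepancy arises: since $\phi(z,p,t)\sim\phi_0(z,p,1/t)$, the region $t\le 1$ of the punctured tube in $M$ is absorbed into the region $t\ge 1$ of the tube in $S^n$, so cutting along the slice $t=1$ exhibits the fiber connect sum directly as $\bigl(M\setminus\phi(S^k\times\mathrm{int}\,D^{n-k})\bigr)$ glued to $\bigl(S^n\setminus\phi_0(S^k\times\mathrm{int}\,D^{n-k})\bigr)$ by the identity in the framing coordinates $(z,p)$; the inversion $t\mapsto 1/t$ only reverses the collar direction, as it must for a boundary gluing. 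It then remains to identify $S^n\setminus\phi_0(S^k\times\mathrm{int}\,D^{n-k})$ with $D^{k+1}\times S^{n-k-1}$ by a diffeomorphism restricting on the boundary to $\phi_0(z,p)\mapsto(z,p)$, which follows from uniqueness of tubular neighbourhoods of the coordinate subsphere together with the hypothesis that $\phi_0$ realizes the trivial framing, exactly as your genus-splitting picture suggests. With that adjustment your proof is complete and matches the intended content of the reference.
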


Therefore $M$ is diffeomorphic to the result of the surgery on $S^6$ along $M$ with the trivial framing.

\begin{lemma} \label{final}

The surgery on $S^6$ along $S^1$ with the trivial framing, is diffeomorphic to $S^4 \times S^2$.

\end{lemma}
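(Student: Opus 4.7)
The plan is to interpret the surgery as the boundary of a $2$-handle attachment to $D^7$ and then identify the resulting $7$-manifold as a product. By definition of surgery, the manifold in question is diffeomorphic to $\partial W$, where $W := D^7 \cup_{\phi} (D^2 \times D^5)$ and $\phi \colon S^1 \times D^5 \hookrightarrow \partial D^7 = S^6$ is an embedding realizing the trivial framing of the core circle.

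Next I would write $D^7 = D^2 \times D^5$ as a product, so that the boundary sphere decomposes as $S^6 = \partial(D^2 \times D^5) = (S^1 \times D^5) \cup_{S^1 \times S^4} (D^2 \times S^4)$. Under this decomposition, the circle $S^1 \times \{0\} \subset S^1 \times D^5 \subset S^6$ is a standardly embedded unknot, and the framing of its tubular neighborhood inherited from the product structure of $D^2 \times D^5$ coincides (up to homotopy) with the trivial framing of Definition \ref{trivfr}. Assuming this, attaching the $2$-handle along $S^1 \times D^5 = \partial D^2 \times D^5 \subset \partial(D^2 \times D^5)$ with the product framing gives
\[
W \;\cong\; (D^2 \times D^5) \cup_{S^1 \times D^5} (D^2 \times D^5) \;=\; (D^2 \cup_{S^1} D^2) \times D^5 \;=\; S^2 \times D^5,
\]
and taking the boundary yields $\partial W = S^2 \times S^4 \cong S^4 \times S^2$, as claimed.

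The only genuinely nontrivial step is to verify that the product framing on $S^1 \times \{0\} \subset D^2 \times D^5$ matches the trivial framing of Definition \ref{trivfr}. The plan for this is to use that any two smooth embeddings of $S^1$ into $S^6$ are ambient isotopic and that ambient isotopy respects homotopy classes of framings: one applies an isotopy carrying $S^1 \times \{0\}$ to the standard coordinate circle $S_0$ and checks that the constant sections in the $D^5$-factor are carried to the standard basis $e_3, \ldots, e_7$. Since this is the same sort of bookkeeping already used in Lemma \ref{revfr}, I do not expect it to pose a real obstacle, and the identification of $W$ with $S^2 \times D^5$ then completes the argument.
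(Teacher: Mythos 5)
Your proof is correct and follows essentially the same route as the paper: both convert the surgery into the boundary of a $2$-handle attached to $D^7$ along the trivially framed unknotted circle, identify the resulting $7$-manifold as $S^2 \times D^5$, and take its boundary $S^4 \times S^2$. The only difference is that where the paper quotes \cite[Example 4.1.4 (d)]{GS} for the identification of the handlebody, you prove it directly via the decomposition $D^7 \cong D^2 \times D^5$; the framing check you defer is indeed immediate, since for the standard coordinate circle the product framing coming from the $D^5$-factor is literally the constant framing $e_3,\ldots,e_7$ of Definition \ref{trivfr}.
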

\begin{proof}
Consider the disk $D^7$, and attach a $2$-handle to $\partial D^7 = S^6$ via the trivial framing. Then, by \cite[Example 4.1.4 (d)]{GS}, the resulting manifold with boundary is $D^5 \times S^2$. It is known that the boundary of the handle attachment along an embedded sphere in the boundary is diffeomorphic to the surgery on the same sphere in the boundary, with the same framing (\cite[Page 154]{GS}, \cite[page 112]{Ko}). Therefore, the surgery on $S^6$ along $S^1$ with trivial framing is diffeomorphic to the boundary of $D^5 \times S^2$, namely $S^4 \times S^2$. 
\end{proof}

\begin{proof}[Proof of Theorem \ref{main}]
The Kustarev sum of two copies of $S^6$, is by definition the fiber connect sum of two copies of $S^6$ along circles with the equivariant framing on both sides, denote this manifold by $M$. By Theorem \ref{nonframe} the equivariant framing is the non-trivial one. By Lemma \ref{revfr} $M$ is diffeomorphic to the the fiber connect sum of two copies of $S^6$ along circles with the trivial framing on both sides. By Lemma \ref{surgery} $M$ is diffeomorphic to the result of sugery of $S^6$ along an embedded circle with the trivial framing. By Lemma \ref{final}, $M$ is diffeomorphic to $S^4 \times S^2$.
\end{proof}

\subsection{Proof of Corollary \ref{exotic}}

In the following, we consider the Kustarev sum of two copies of $S^6$, with almost complex circle actions with weights $$ \{a,b,-a-b\},\{-a,-b,a+b\}$$ and $$ \{c,d,-c-d\},\{-c,-d,c+d\},$$ respectively (see \cite[Example 1.9.1]{GZ} for the construction of circle actions on $S^6$.). In the following we choose the parameters so that we have $b>a>1$, and $d>c>1$.

\begin{proof} [Proof of Corollary \ref{exotic}]

Suppose for a contradiction that the Kustarev sum is equivariantly diffeomorphic to a linear action. Let $\Phi$ denote the equivariant diffeomorphism.

For integers $w_1,w_2,w_3$ linear $S^1$ actions on $S^4 \times S^2$,  are given by $S^4 \times S^2 \subset (\mathbb{C}^2 \oplus \mathbb{R}) \times (\mathbb{C} \oplus \mathbb{R})$, given by $$z.((u_1,u_t,t), (v,s) ) =  ((z^{w_1} u_1,  z^{w_{2}} u_2,t), (z^{w_{3}} v,s) ).$$ 

The fixed point set of the action consists of the four points of the form $((0,0,\pm 1) , (0,\pm1))$. Since the action on the Kustarev sum has isolated fixed points the linear action has isolated fixed points. Therefore, without loss of generality $w_{i}>0$ for each $i$.

 Let $p$ be a fixed point which is mapped to $((0,0,-1),(0,-1))$. The $\mathbb{R}$ weights of the linear action at this fixed point $w_{1},w_{2},w_{3}$ and the $\mathbb{R}$ weights of the Kustarev sum are $\{a,b,a+b\}$, where $a,b>1$. Because, $\Phi$ is an equivariant diffeomorphism on neighbourhoods of these fixed points, the weights are equal. To see this one can pick an invariant Riemannian metric and consider the exponential at each of the fixed points respectively. In particular, $w_{i}>1$ for each $i$, and $w_{i}$ are pairwise coprime.

 Next, since each $w_{i}$ is greater than one, there are $6$ isotropy spheres of the linear action, of weight $w_{1},w_{2}$ containing $\{((0,0,-1),(0,1)),((0,0,1),(0,1))\}$ and containing $\{ ((0,0,-1),(0,-1)) , ((0,0,1),(0,-1)) \}$.

 Also,  there exist isotropy spheres of weight $w_{3}$ containing $\{ ((0,0,1),(0,-1)), ((0,0,1),(0,1)) \}$ and containing
 $\{ ((0,0,-1),(0,-1)),
 ((0,0,-1),(0,1)) \}$.

In particular, for the linear action the subset of points with non-trivial stabilizer group is connected. However, since the Kustarev sum is performed on a neighborhood of a free orbit, it is clear that for the action on the Kustarev sum the set of points with non-trivial stabilizer group is disconnected, a contradiction.
\end{proof}

\section{Some Topological consequences of Jang's Classification}

In this section, we prove a topological consequence of Jang's classification where the manifold is simply connected and $\mathbb{Z}$-equivariantly formal and fits into Case 6 of Jang's classification (also called the $Bl_{C}(S^6)$ case). 

\begin{theorem} \label{unkquad}
	Suppose that $M$ is a simply connected, almost complex $6$-manifold and an almost complex circle action fitting into case $6$ of Jang's classification. Moreover, suppose that it is $\mathbb{Z}$-equivariantly formal. Then, $M$ is diffeomorphic to a Quadric $Q \subset \mathbb{CP}^4$.
\end{theorem}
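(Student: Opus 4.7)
The plan is to reduce the problem to the Wall--Jupp--Zubr classification of simply connected smooth closed $6$-manifolds with torsion-free integral cohomology. First I extract the topology of $M$: the $4$ fixed points give $\chi(M)=4$, and $\mathbb{Z}$-equivariant formality together with simple connectedness and Poincar\'e duality force the integral cohomology to be torsion-free with Betti numbers $b_0=b_6=1$, $b_2=b_4=1$, and vanishing odd Betti numbers. I fix a generator $x\in H^2(M,\mathbb{Z})$ and the Poincar\'e-dual generator $y\in H^4(M,\mathbb{Z})$ with $\langle x\smile y,[M]\rangle=1$; setting $d:=\langle x^3,[M]\rangle$ gives $x^2=dy$, so the trilinear cup product is determined by the single integer $d$.

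Next I compute the two relevant Chern numbers using ABBV localization. Applying Lemma \ref{Degreeformula} to the weights of case (f) produces, after cancellation of the four fixed-point contributions, $\int_M c_1(TM)^3=-2$, while Lemma \ref{prel} gives $\int_M c_1 c_2=24\,\mathrm{Td}(M)=0$. Writing $c_1(TM)=kx$ and $c_2(TM)=ny$, the relation $k^3 d=-2$ with $k,d\in\mathbb{Z}$ forces $k=\pm 1$; after replacing $x$ by $-x$ if necessary I may take $k=-1$ and hence $d=2$, and then $\int_M c_1c_2=-n=0$ forces $n=0$, so $c_2(TM)=0$.

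With these in hand, the Wall--Jupp--Zubr invariants of $M$ are immediate: $p_1(TM)=c_1(TM)^2-2c_2(TM)=x^2=2y$ (so $p_1\smile x=2$) and $w_2(TM)=c_1(TM)\bmod 2=x\bmod 2\neq 0$. A direct computation from the Euler sequence shows that the smooth quadric $Q\subset\mathbb{CP}^4$ has $H^2(Q,\mathbb{Z})=\mathbb{Z}\langle h\rangle$, $h^3=2$, $p_1(Q)=h^2$, and $w_2(Q)=h\bmod 2\neq 0$, matching those of $M$ under $x\leftrightarrow h$. Invoking the Wall--Jupp--Zubr classification for simply connected $6$-manifolds with torsion-free cohomology and vanishing $H^3$ then yields the desired diffeomorphism $M\cong Q$. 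The principal obstacle is pinning down the precise form of Wall--Jupp--Zubr applicable here --- specifically, that in the torsion-free, $H^3=0$ regime the diffeomorphism type is determined by $H^2$, the trilinear cup-product form, $p_1$, and $w_2$ --- and checking that no further invariants (such as a generalized Euler class) are needed; the ABBV computations themselves are routine but must be carried out attentively across the four fixed-point terms.
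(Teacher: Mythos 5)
Your proposal is correct and follows essentially the same route as the paper: ABBV localization (Lemma \ref{Degreeformula}) for $c_1^3=-2$, the Todd genus (Lemma \ref{prel}) for $c_1c_2=0$, equivariant formality plus $\chi(M)=4$ to pin down the torsion-free cohomology, then $c_1$ primitive, $c_2=0$, $p_1=x^2$, $w_2\neq 0$, and Wall--Jupp--Zubr. The only difference is presentational: you make the generator bookkeeping ($k=\pm1$, $d=2$) explicit where the paper argues via divisibility of $c_1^3$ by $8$, and both arguments are sound.
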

\begin{proof}
	
	Since the circle action has $4$ fixed points, $\chi(M) = 4$. Then, combining this condition with $\pi_{1}(M)=1$,  and $H^{3}(M,\mathbb{Z} )= 0 $, $  H_{2}(M,\mathbb{Z})= \mathbb{Z}$, a routine application of the universal coefficients theorem shows that $H^{i}(M,\mathbb{Z}) = \mathbb{Z}$ for $i=0,2,4,6$ and  $H^{i}(M,\mathbb{Z}) = 0$  otherwise.
	
	By Lemma \ref{Degreeformula}  $c_1(J)^3 = -2$. In particular this implies that $w_{2}(M) $ is the non-trivial element of $H^{2}(M,\mathbb{Z}_2) = \mathbb{Z}_2$, because otherwise $c_{1}$ is even so $c_{1}^3$ would be divisible by $8$. Also by Lemma \ref{prel} $c_1 c_2=0$.  Let $\alpha=-c_1 \in H^{2}(M,\mathbb{Z})$, note that $\alpha$ is a generator of  $H^{2}(M,\mathbb{Z})=\mathbb{Z}$. Then by Poincare duality $c_1c_2=0$ implies that $c_2=0$. Finally, using the equation $p_1 = c_1^2-2c_2$, gives that $p_1 = \alpha^2$. Hence we have shown that the cohomology ring and Pontryagin, and second Stiefel-Whitney classes are isomorphic to the Quadric, the result follows from the Wall-Jupp-Zubr theorem.
\end{proof}

\section{Some questions and further directions}
In \cite{Ku}, Kustarev originally utilized his surgery operation to show that there are infinitely many diffeomorphism types of almost complex $6$-manifolds having a almost complex circle action with two fixed points. He showed this by taking the Kustarev sum of $S^6$, and an almost complex $6$-manifold of the form $M^4 \times T^2$, with a free almost complex circle action on the $T^2$-factor. This construction shows that the fundamental group varies through infinitlely many isomorphism types.

It appears however, that the following question about topological types of almost complex $6$-manifolds with two fixed points remained open.

\begin{question} \label{homologysphere}
Suppose that $M$ is a closed, almost complex, simply connected $\mathbb{Q}$-homology $S^6$ with an almost complex circle action with two fixed points. Is $M$ diffeomorphic to $S^6$?
\end{question}

We note that without the assumption of a circle action there is a wealth of almost complex, simply connected, rational homology $6$-spheres.  

We also pose a question about actions with $4$ fixed points. Similarly to the above, in each of cases 1-4 of Jang's classification \cite[Theorem 1.1]{Ja} taking a Kustarev sum with almost complex $6$-manifolds with manifolds having free circle actions, shows that there are infinitely diffeomorphism types and furthermore $\pi_1$ varies through infinitlely many isomorphism types.

In the following question we pose a finiteness question  for simply connected, equivariantly formal manifolds appearing in Jang's classification.

\begin{question} \label{finite} \begin{enumerate}
\item[a)] Are there finitely many diffeomorphism types of, simply connected, $\mathbb{Z}$-equivariantly formal, almost complex 6-manifolds having a circle action with 4 fixed points?

\item[b)] Are there finitely many homotopy types of simply connected, $\mathbb{Z}$-equivariantly formal, almost complex 6-manifolds having a circle action with 4 fixed points?
\end{enumerate}
\end{question}

The existence of the new example $S^4 \times S^2$ of Theorem \ref{main}, which fits into the above class but is not covered by previous classifications \cite{A,To2}, perhaps gives additional motivation to consider Question \ref{finite}.
  
%\begin{theorem}
%Consider the quasi linear circle action on $S^4 \times S^2$ or $S^4 \times S^2$ such that none of the weights have modulus $1$. Then, this circle action does not preserve an almost complex structure.
%\end{theorem}
%\begin{proof}
%elementary computations in the cohomology ring of these sphere bundles, and the fact they poth have vanishing first Pontryagin class, shows that any almost complex structure staisfies $(c_1^3,c_1c_2) = (0,0)$. Therefore applying Lemma \ref{prel} to each case of Jang's classification, any such action would fit into case $4$. But the set of isotropy sphere of the linear action is connected.
%\end{proof}

Contact: pako@mathematik.uni-marburg.de, nlindsay@math.uni-koeln.de.

%\printbibliography
%\bibliography{PetrieAction}
%\bibliographystyle{plain}

%\begin{thebibliography}{9}
%
%
%
%
%
%
%
%\bibitem{LP} N. Lindsay, D. Panov. $S^{1}$-invariant symplectic hypersurfaces in dimension 6 and the Fano condition. Journal of Topology, Volume 12, Issue 1, March 2019, Pages 221--285.
%
%\bibitem{P} T. Petrie. $S^1$-actions on homotopy complex projective spaces. Bulletin of the American Mathematical Society. Volume 78 , Issue 2 PP: 105-153. (1972).
%
%
%\bibitem{To2} S. Tolman. A Symplectic Generalisation of Petries Conjecture. Transactions of the American Mathematical Society 362 (08): 3963--3996, 2010.
%
%\bibitem{Z} W. Ziller. Group actions lecture notes.
%\\https://www.math.upenn.edu/~wziller/math661/LectureNotesLee.pdf.
%
%\end{thebibliography}

\end{document}